\providecommand{\U}[1]{\protect\rule{.1in}{.1in}}
\newtheorem{theorem}{Theorem}[section]
\newtheorem*{acknowledgement*}{Acknowledgements}
\newtheorem{corollary}[theorem]{Corollary}
\newtheorem{lemma}[theorem]{Lemma}
\newtheorem{remark}[theorem]{Remark}
\newtheorem{theoremA}{Theorem} 
\newtheorem{corollaryA}[theoremA]{Corollary}
\def\Ric{{\mathrm{Ric}}}
\def\<{\left\langle}
\def\>{\right\rangle}
\newcommand{\abs}[1]{\lvert{#1}\rvert}
\newcommand{\scal}[2]{\langle{#1},{#2}\rangle}
\newcommand{\matrice}{\begin{pmatrix}}
\newcommand{\ok}{\end{pmatrix}}
\newcommand{\dmatrice}{\begin{vmatrix}}
\newcommand{\dok}{\end{vmatrix}}
\newcommand{\twosystem}[2]{\left\{\begin{aligned} &#1\\ &#2\end{aligned}\right.}
\newcommand{\threesystem}[3]{\left\{ \begin{aligned}&#1\\ &#2\\&#3\end{aligned}\right.}
\newcommand{\nero}{\smallskip$\bullet\quad$\rm}
\newcommand{\Cal }[1]{{\mathcal {#1}}}
\newcommand{\norm}[1]{\lVert{#1}\rVert}
\newcommand{\reals}{{\bf R}}
\newcommand{\sphere}[1]{{\bf S}^{#1}}
\newcommand{\real}[1]{{\bf R}^{#1}}
\newcommand{\tr}{{\rm tr}}
\newcommand{\bd}{\partial}
\newcommand{\derive}[2]{\dfrac{\bd #1}{\bd#2}}
\def\Ric{{\mathrm{Ric}}}
\def\<{\left\langle}
\def\>{\right\rangle}
\begin{document}
\title[Index and first Betti number of $f$-minimal hypersurfaces]
{Index and first Betti number of $f$-minimal hypersurfaces and self-shrinkers}

\subjclass[2010]{53C42, 53C21}
\keywords{$f$-minimal hypersurfaces, self-shrinkers, index estimates, Betti number, genus}

\author[Debora Impera]{Debora Impera}
\address[Debora Impera]{Dipartimento di Scienze Matematiche "Giuseppe Luigi Lagrange", Politecnico di Torino, Corso Duca degli Abruzzi, 24, Torino, Italy, I-10129}
\email{debora.impera@gmail.com}

\author[Michele Rimoldi]{Michele Rimoldi}
\address[Michele Rimoldi]{Dipartimento di Scienze Matematiche "Giuseppe Luigi Lagrange", Politecnico di Torino, Corso Duca degli Abruzzi, 24, Torino, Italy, I-10129}
\email{michele.rimoldi@gmail.com}

\author[Alessandro Savo]{Alessandro Savo}
\address[Alessandro Savo]{Dipartimento SBAI, Sezione di Matematica, Sapienza Universit\`a di Roma, Via Antonio Scarpa, 16, Roma, Italy, I-00161}
\email{alessandro.savo@sbai.uniroma1.it}

\begin{abstract}

We study the Morse index of self-shrinkers for the mean curvature flow and, more generally, of $f$-minimal hypersurfaces in a weighted Euclidean space endowed with a convex weight. 
When the hypersurface is compact, we show that the index is bounded from below by an affine function of its first Betti number. When the first Betti number is large, this improves index estimates known in literature. In the complete non-compact case, the lower bound is in terms of the dimension of the space of weighted square summable $f$-harmonic $1$-forms; in particular, in dimension $2$,  the procedure gives an index estimate in terms of the genus of the surface.
\end{abstract}

\maketitle

\tableofcontents

\section{Introduction}

\subsection{Main definitions} 

It is well-known that an immersed hypersurface $\Sigma^m$ of a given Riemannian manifold $(M^{m+1},g)$ is minimal (i.e. it has everywhere vanishing mean curvature) if and only if it is critical for the volume  functional 
$$
\Sigma\mapsto {\rm vol}(\Sigma)=\int_{\Sigma}\,d\mu
$$ 
where $d\mu$ is the Riemannian measure associated to the induced metric $g$ on $\Sigma$. More generally, given a smooth function $f$ on $M^{m+1}$, one can consider the 
so-called {\it $f$-volume}:
$$
{\rm vol}_f(\Sigma)=\int_{\Sigma}e^{-f}\,d\mu.
$$ 
It is then natural to study the immersions $x:\Sigma^m\to M^{m+1}$ which are critical for the $f$-volume. For the sake of exposition assume, for the moment, that $\Sigma$ is compact and orientable.  Given  $u\in C^{\infty}(\Sigma)$ and a unit normal vector field $N$ on $\Sigma$, consider the associated one-parameter deformation:
\begin{equation}\label{opv}
\Sigma_t\doteq \{\exp_x(tu(x)N(x)): x\in\Sigma\}
\end{equation}
for $t\in (-\epsilon,\epsilon)$ sufficiently small. Hence $\Sigma_0=\Sigma$, and a calculation shows that:
$$
\dfrac{d}{dt}|_{t=0}{\rm vol}_f(\Sigma_t)=-\int_{\Sigma}(H+\derive fN)ue^{-f}\,d\mu.
$$
Here $H={\tr A}$, and $A$ is the second fundamental form (shape operator) of the immersion, defined as $AX=-\nabla_XN$ on all tangent vectors $X$ to $\Sigma$;  moreover,  $\nabla$ is the Levi-Civita connection of the ambient manifold $M$.
The conclusion is that $\Sigma$ is critical for the $f$-volume if and only if 
\begin{equation}\label{fminimal}
H+\derive{f}{N}=0.
\end{equation}
If one defines the {\it $f$-mean curvature} of $\Sigma$ as 
$$
H_f\doteq H+\derive{f}{N},
$$
then $\Sigma$ is critical for the $f$-volume if and only if it is {\it $f$-minimal} in $M$, which means, by definition, that $H_f=0$ identically on $\Sigma$.

\smallskip

If $\Sigma$ is a local minimum of the weighted area functional we say that it is {\it $f$-stable}. When $\Sigma$ is unstable, it makes sense to investigate its Morse index:  this is, roughly speaking, the maximal dimension of a linear space of deformations that decrease the weighted volume up to second order. To compute it, we need to compute the second variation of the $f$-volume, which is intimately connected with the structure of the triple $M_{f}\doteq (M^{m+1},g, e^{-f}d\mu)$; such structure  is often termed a {\it weighted manifold}. 
In fact, if $uN$ is the normal variation used in \eqref{opv}, and $\Sigma$ is $f$-minimal, then:
$$
Q_f(u,u)\doteq\dfrac{d^2}{dt^2}|_{t=0}{\rm vol}_f(\Sigma_t)=
\int_{\Sigma}\Big(\abs{\nabla u}^2-({\rm Ric}_f(N,N)+\abs{A}^2)u^2\Big)e^{-f}\,d\mu
$$
where ${\rm Ric}^{M}_f=\mathrm{Ric}^{M}+\mathrm{Hess}f$ is the Bakry-\'Emery Ricci tensor of the weighted ambient manifold $M_{f}$.

\smallskip

The maximum dimension of a subspace of $C^{\infty}(\Sigma)$ on which $Q$ is negative definite is called the {\it $f$-index} of $\Sigma$, and is denoted by ${\rm Ind}_f(\Sigma)$.  This number can also be seen as the number of negative eigenvalues of the Jacobi operator $L_f$, associated to the quadratic form $Q_f$ and acting on $L^2(\Sigma, e^{-f}\,d\mu)$. We will give an explicit expression of $L_f$ later. 

\smallskip

If $\Sigma$ is compact its $f$-index is always finite. If $\Sigma$ is complete, not compact, we can define the index ${\rm Ind}_f(\Omega)$ of any relatively compact domain $\Omega\subseteq\Sigma$ as the maximum dimension of a subspace of $C^{\infty}_0(\Omega)$ (smooth functions with support in $\Omega$) on which $Q_f$ is negative definite; then define
$$
{\rm Ind}_f(\Sigma)=\sup\{{\rm Ind}_f(\Omega):\Omega\subset\subset\Sigma\}.
$$
Obviously, ${\rm Ind}_f(\Sigma)$ can be infinite. 

\smallskip

In this paper we will give lower estimates of the $f$-index of $f$-minimal hypersurfaces of the weighted manifold $(\real{m+1},g_{\rm can}, e^{-f}\,d\mu)$. When $\Sigma$ is compact and the weight $f$ is a convex function on $\real{m+1}$ the lower bound will only depend on the topology of $\Sigma$ through its first Betti number $b_1(\Sigma)$.

\begin{remark} 
\rm{The $f$-minimal equation \eqref{fminimal}, together with the rules of conformal change,  tells us that $\Sigma^m$ is $f$-minimal in $(M^{m+1},g)$ if and only if it is minimal (in the usual sense) in the manifold $(M^{m+1},e^{-\frac{2f}{m}}g)$. Moreover, the $f$-index coincides with the usual index of the minimal immersion $\Sigma^m\to (M^{m+1},e^{-\frac{2f}{m}}g)$.} 
\end{remark}

\subsection{Self-shrinkers} Perhaps the main motivation for this paper was to study an important class of $f$-minimal hypersurfaces : the {\it self-shrinkers} of the mean curvature flow. By definition, they are  connected, orientable, isometrically immersed hypersurfaces $x:\Sigma^m\to\real{m+1}$ whose mean curvature vector field $\mathbf{H}$ satisfies the equation
\begin{equation}\label{SS}
x^{\bot}=-\mathbf{H},
\end{equation}
where $(\cdot)^{\bot}$ denotes the projection on the normal bundle of $\Sigma$. Self-shrinkers play an important role in the study of singularities developed along the mean curvature flow and have been extensively studied in recent years; see e.g. \cite{CM}, \cite{CaoLi}, \cite{DX}, and references therein.

Taking the scalar product on both sides of \eqref{SS} with a unit normal vector field $N$ we see that  self-shrinkers are $f$-minimal hypersurfaces of ($\real{m+1}, g_{\rm can}, e^{-f}\,d\mu)$ for the weight function $f(x)=\frac{|x|^2}{2}$. 

\smallskip

Let us recall the main results regarding their index.
It was proved by T. Colding and W. Minicozzi, \cite{CM}, that every complete properly immersed self-shrinker is necessarily $f$-unstable (i.e it has $f$-index greater than or equal to one). Note that this result was later generalized in \cite{IR} by the first two authors, to self-shrinkers with at most exponential (intrinsic) weighted volume growth.

In the equality case, rigidity results have been proved by C. Hussey, \cite{Hus}, under the additional assumption of embeddedness. This last assumption was later removed in \cite{Imp}. More precisely, one has that if a complete properly immersed self-shrinker in $\real{m+1}$ has Morse index $1$, then it has to be an hyperplane through the origin. Furthermore, if the self-shrinker is not an hyperplane through the origin, then the Morse index has to be at least $m+2$, with equality if and only if the self-shrinker is a generalized cylinder of the form $\real{m-k}\times\sphere{k}_{\sqrt{k}}$ for some $1\leq k\leq m$. 

For other basic concepts and results about self-shrinkers and their stability properties we refer to \cite{CM} and the very recent \cite{Imp}.


\subsection{Main results} It is by now a well-established guiding principle that the index of a compact minimal hypersurface of a positively curved manifold is sensitive to the topology, more precisely, to the first Betti number of the hypersurface; hence, rich cohomology in degree one often implies high instability of the immersion. 
In fact,  Schoen, Marques and Neves conjecture that the index of a compact minimal hypersurface of a manifold with positive Ricci curvature is bounded below by an affine function of the first Betti number. 

\smallskip

In dimension $2$, this was first shown by A. Ros in \cite{Ros} for immersions in $\real 3$ or a quotient of it by a group of translations; in higher dimensions  the third author  proved this fact when the ambient manifold is the round sphere (see \cite{Sav}). Recently L. Ambrozio, A. Carlotto and B. Sharp generalized the methods of Ros and Savo and verified the conjecture for a larger class of ambient spaces (see \cite{ACS}).  However, as of today, the full conjecture above is still open, although the method was employed to obtain  bounds for minimal free boundary immersions (\cite{ACS2} and \cite{Sar}) and complete minimal immersions in $\real n$ (\cite{Li}).

\smallskip

Test-functions for the Jacobi operator are constructed as follows.  One needs the family of harmonic one-forms, which are known by classical Hodge theory to represent cohomology in degree one, and a distinguished family $\Cal P$ of vector fields on $\Sigma$, given by orthogonal projection onto $\Sigma$ of parallel vector fields in Euclidean space of suitable dimension. In dimension $2$, Ros used the family of test-functions:
$
u=\omega(V),
$
where $\omega$ is a harmonic one-form and $V\in{\Cal P}$. In higher dimensions these test-functions no longer work; indeed,  for immersions  $\Sigma^m\to\sphere{m+1}$ the third author introduced in \cite{Sav} the test-functions
$
u=\omega(X_{V,W}),
$
where $X_{V,W}=\scal{\bar V}{N}W-\scal{\bar W}{N}V$;  here $\bar V,\bar W$ are parallel vector fields in $\real{m+2}$ and $V,W$ are their projections on $\Sigma$.

\smallskip

In this paper we employ the method in \cite{Sav} to prove lower bounds in the weighted case.
We start with compact $f$-minimal hypersurfaces; in this case we can improve the bound by adding the number of small eigenvalues of the weighted Laplacian. 

Recall that the weighted Laplacian is the operator acting on $u\in C^{\infty}(\Sigma)$ as follows:
$$
\Delta_fu=\Delta u+\scal{\nabla f}{\nabla u}
$$
where $\Delta\doteq-\mathrm{div}(\nabla\,)$ is the usual Laplacian and $f$ is the weight. The weighted laplacian is self-adjoint with respect to the measure $e^{-f}\,d\mu$ and has a discrete spectrum $0=\lambda_0(\Delta_f)<\lambda_1(\Delta_f)\leq\lambda_2(\Delta_f)\leq \dots$. For any positive number $a$, set
$$
N_{\Delta_f}(a)=\#\{\text{positive eigenvalues of $\Delta_f$ which are less than $a$}\}.
$$
Here is our first main result.

\begin{theoremA}\label{maincompact}Let $\Sigma$ be a compact $f$-minimal hypersurface of the weighted manifold $M_f\doteq (\real{m+1},g_{\rm can}, e^{-f}\,d\mu)$. Assume that the Bakry-\'Emery Ricci tensor of $M_f$ is bounded below by $\mu>0$, that is,  ${\rm Ric}_f\geq\mu>0$. Then:
\begin{equation}\label{betti}
{\rm Ind}_f(\Sigma)\geq\dfrac{2}{m(m+1)}\Big({N_{\Delta_f}(2\mu)+b_1(\Sigma)}\Big).
\end{equation}
\end{theoremA}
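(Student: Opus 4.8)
The plan is to transplant to the weighted flat setting the test‑function construction of Savo \cite{Sav} and Ambrozio--Carlotto--Sharp \cite{ACS}, exploiting the fact that in $\real{m+1}$ one has $\Ric_f=\mathrm{Hess}\,f$. We take $\Sigma$ two-sided with global unit normal $N$, so that weighted Hodge theory is available. Write the index form as $Q_f(u,u)=\int_\Sigma(\abs{\nabla u}^2-q\,u^2)e^{-f}d\mu$ with $q=\mathrm{Hess}\,f(N,N)+\abs{A}^2$, and let $L_f$ be the associated Jacobi operator on $L^2(\Sigma,e^{-f}d\mu)$. Fix the parallel orthonormal frame $E_1,\dots,E_{m+1}$ of $\real{m+1}$ and set $\nu_A=\scal{N}{E_A}$, $V_A=E_A^{\top}$, so that $\sum_A\nu_A^2=1$, $\sum_A\nu_AV_A=0$, $\nabla^\Sigma\nu_A=-AV_A$ and $\nabla^\Sigma_YV_A=\nu_A\,AY$. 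For $1\le A<B\le m+1$ define the tangent field $X_{AB}=\nu_AV_B-\nu_BV_A$; a short computation using the last two identities gives $\nabla^\Sigma_YX_{AB}=\scal{AV_B}{Y}V_A-\scal{AV_A}{Y}V_B$, the terms proportional to $AY$ cancelling.

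Next I choose the space of test‑forms. By weighted Hodge theory (the orthogonal decomposition $\Omega^1=\ker\Delta_f^{[1]}\oplus\operatorname{im}d\oplus\operatorname{im}d_f^*$, where $\Delta_f^{[1]}=dd_f^*+d_f^*d$ and $d_f^*=d^*+\iota_{\nabla f}$) the space $\mathcal H^1_f(\Sigma)$ of $\Delta_f$-harmonic $1$-forms has dimension $b_1(\Sigma)$. For a $\Delta_f$-eigenfunction $\phi$ with $\Delta_f\phi=\lambda\phi$ one has $d_f^*(d\phi)=\Delta_f\phi=\lambda\phi$, hence $\Delta_f^{[1]}(d\phi)=\lambda\,d\phi$, and $d\phi$ is $L^2(e^{-f}d\mu)$-orthogonal to $\mathcal H^1_f(\Sigma)$. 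I therefore set
\[\mathcal F=\mathcal H^1_f(\Sigma)\ \oplus\ \operatorname{span}\{\,d\phi:\ \Delta_f\phi=\lambda\phi,\ 0<\lambda<2\mu\,\},\]
an $L^2(e^{-f}d\mu)$-orthogonal direct sum with $\dim\mathcal F=b_1(\Sigma)+N_{\Delta_f}(2\mu)$; every $\eta\in\mathcal F$ is a sum of $\Delta_f^{[1]}$-eigenforms with eigenvalues in $[0,2\mu)$. For $\eta\in\mathcal F$ and $A<B$ the test functions are $u^\eta_{AB}=\eta(X_{AB})=\scal{\eta^\sharp}{X_{AB}}\in C^\infty(\Sigma)$.

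The heart of the argument is an identity for these test functions. Writing $\eta_A=\scal{\eta^\sharp}{E_A}$ one has $u^\eta_{AB}=\nu_A\eta_B-\nu_B\eta_A$, and the Lagrange identity together with $\sum_A\nu_A^2=1$ and $\sum_A\nu_A\eta_A=\scal{N}{\eta^\sharp}=0$ gives $\sum_{A<B}(u^\eta_{AB})^2=\abs{\eta}^2$. Expanding $\nabla^\Sigma u^\eta_{AB}=\scal{\nabla^\Sigma\eta^\sharp}{X_{AB}}+\scal{\eta^\sharp}{\nabla^\Sigma X_{AB}}$ and summing over $A<B$: the first square sums to $\abs{\nabla\eta}^2$, the second to $\abs{A}^2\abs{\eta}^2-\abs{A\eta^\sharp}^2$, and the cross term vanishes because $\sum_A\nu_A\eta_A=0$ and $\sum_A\nu_A\scal{Ae_k}{E_A}=\scal{N}{Ae_k}=0$; hence $\sum_{A<B}\abs{\nabla^\Sigma u^\eta_{AB}}^2=\abs{\nabla\eta}^2+\abs{A}^2\abs{\eta}^2-\abs{A\eta^\sharp}^2$. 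Consequently
\[\sum_{A<B}Q_f(u^\eta_{AB},u^\eta_{AB})=\int_\Sigma\Big(\abs{\nabla\eta}^2-\abs{A\eta^\sharp}^2-\mathrm{Hess}\,f(N,N)\abs{\eta}^2\Big)e^{-f}d\mu .\]
Now insert the weighted Bochner formula $\int_\Sigma\abs{\nabla\eta}^2e^{-f}=\int_\Sigma\scal{\Delta_f^{[1]}\eta}{\eta}e^{-f}-\int_\Sigma\Ric_f^\Sigma(\eta^\sharp,\eta^\sharp)e^{-f}$ and the Gauss relation $\Ric_f^\Sigma(Y,Y)=\mathrm{Hess}\,f(Y,Y)-\abs{AY}^2$ for tangent $Y$ (valid because $\real{m+1}$ is flat and $H_f=0$, which kills the ambient‑curvature term and the mean‑curvature term of the Gauss equation). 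The $\abs{A\eta^\sharp}^2$ contributions cancel, leaving
\[\sum_{A<B}Q_f(u^\eta_{AB},u^\eta_{AB})=\int_\Sigma\Big(\scal{\Delta_f^{[1]}\eta}{\eta}-\mathrm{Hess}\,f(\eta^\sharp,\eta^\sharp)-\mathrm{Hess}\,f(N,N)\abs{\eta}^2\Big)e^{-f}d\mu .\]
Since $\eta^\sharp\perp N$ and $\mathrm{Hess}\,f=\Ric_f\ge\mu$, we have $\mathrm{Hess}\,f(\eta^\sharp,\eta^\sharp)+\mathrm{Hess}\,f(N,N)\abs{\eta}^2\ge 2\mu\abs{\eta}^2$ — this is the origin of the threshold $2\mu$ — so $\sum_{A<B}Q_f(u^\eta_{AB},u^\eta_{AB})\le\int_\Sigma(\scal{\Delta_f^{[1]}\eta}{\eta}-2\mu\abs{\eta}^2)e^{-f}d\mu$. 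For $\eta\in\mathcal F\setminus\{0\}$, orthogonality of the eigen‑pieces (all with eigenvalue $<2\mu$) yields $\int_\Sigma\scal{\Delta_f^{[1]}\eta}{\eta}e^{-f}<2\mu\int_\Sigma\abs{\eta}^2e^{-f}$, whence $\sum_{A<B}Q_f(u^\eta_{AB},u^\eta_{AB})<0$.

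It remains to run the standard counting step. Let $I={\rm Ind}_f(\Sigma)$, finite since $\Sigma$ is compact, and let $\mathcal N\subset L^2(\Sigma,e^{-f}d\mu)$ be the span of the $I$ eigenfunctions of $L_f$ with negative eigenvalue, so that $Q_f\ge0$ on the $L^2(e^{-f}d\mu)$-orthogonal complement of $\mathcal N$. The linear map $\mathcal F\to\bigoplus_{A<B}\mathcal N$, $\eta\mapsto(\Pi_{\mathcal N}u^\eta_{AB})_{A<B}$, has target of dimension $\binom{m+1}{2}I$; if $\dim\mathcal F>\binom{m+1}{2}I$ it has a nonzero element $\eta$ in its kernel, and then every $u^\eta_{AB}$ is $L^2(e^{-f}d\mu)$-orthogonal to $\mathcal N$, forcing $\sum_{A<B}Q_f(u^\eta_{AB},u^\eta_{AB})\ge0$ and contradicting the previous step. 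Hence $\binom{m+1}{2}\,{\rm Ind}_f(\Sigma)\ge\dim\mathcal F=b_1(\Sigma)+N_{\Delta_f}(2\mu)$, and since $\binom{m+1}{2}=\tfrac{m(m+1)}{2}$ this is the asserted inequality. The main obstacle is the bookkeeping in the third step: checking that the cross terms in $\sum_{A<B}\abs{\nabla^\Sigma u^\eta_{AB}}^2$ vanish and that, after Bochner and the Gauss equation, the $\abs{A\eta^\sharp}^2$ and $\abs{A}^2\abs{\eta}^2$ terms all cancel, leaving exactly the combination $\mathrm{Hess}\,f(\eta^\sharp,\eta^\sharp)+\mathrm{Hess}\,f(N,N)\abs{\eta}^2$, whose lower bound $2\mu\abs{\eta}^2$ is what makes the chosen test‑form space — harmonic $1$-forms together with differentials of low eigenfunctions — work uniformly for all $\eta\in\mathcal F$.
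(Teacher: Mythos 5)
Your proof is correct and follows essentially the same route as the paper's: the same test functions $\eta(X_{AB})$ built from projections of parallel fields, the same weighted Weitzenb\"ock formula and Gauss relation $\Ric_f^{\Sigma}=\mathrm{Hess}f-A^2$, the same splitting of the test-form space into $f$-harmonic forms plus differentials of low eigenfunctions, and the same $\binom{m+1}{2}$-counting. The only differences are presentational: you replace the paper's averaging over the sphere of unit parallel fields by the equivalent discrete sum over an orthonormal frame, work at the level of the quadratic form rather than computing $L_f u$ pointwise via Lemma \ref{Lem3}, and bypass the intermediate eigenvalue comparison of Theorem \ref{MainThmCompactfMin} by going directly to the index count.
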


\begin{remark}
\rm{The theorem could also be rephrased as follows: if $\Sigma$ is a compact minimal hypersurface of the Riemannian manifold $(\real{m+1},e^{-\frac{2f}{m}}g_{\rm can})$ such that ${\rm Hess}f\geq \mu>0$, then the lower bound \eqref{betti} holds.}
\end{remark}  

\smallskip

The theorem will be proved in Subsection \ref{Subsec2.2} below. In particular, the $f$-index is large provided that:

\nero the first Betti number is large, or

\nero there are many small (i.e., less than $2\mu$) eigenvalues of the weighted Laplacian. 

\smallskip

When $\Sigma$ is a self-shrinker we take $f(x)=\frac{\abs{x}^2}{2}$. In that case ${\rm Hess}_f={\rm Ric}^{M}_f=g_{\rm can}$ and we get an affine lower bound.

\begin{corollaryA}\label{MainCorCompactSS}
Let $x:\Sigma^{m}\to\real{m+1}$ be a compact self-shrinker. Then
\begin{equation}\label{IndEst}
\mathrm{Ind}_{f}(\Sigma)\geq\frac{2}{m(m+1)}{b_{1}(\Sigma)}+m+1.
\end{equation}
In particular if $\Sigma$ has dimension $m=2$, letting $g=\mathrm{genus}(\Sigma)$, we have that
$$
\ \mathrm{Ind}_{f}(\Sigma)\geq \frac{2}{3}g+3.
$$
\end{corollaryA}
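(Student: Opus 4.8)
The plan is to run the argument behind Theorem~\ref{maincompact} but to feed it the extra structure carried by a self-shrinker, in order to upgrade the term coming from the small eigenvalues of $\Delta_f$ into the affine term $m+1$. First note that for $f(x)=\abs{x}^2/2$ one has $\Hess f=g_{\mathrm{can}}$, hence $\Ric_f=\Ric^{\real{m+1}}+\Hess f=g_{\mathrm{can}}$, so Theorem~\ref{maincompact} applies with $\mu=1$; in particular the Jacobi operator is $L_fu=\Delta_fu-(1+\abs A^2)u$ and $\mathrm{Ind}_f(\Sigma)$ equals the number of its negative eigenvalues on $L^2(\Sigma,e^{-f}\,d\mu)$. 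On the other hand Theorem~\ref{maincompact} alone only yields $\mathrm{Ind}_f(\Sigma)\ge\tfrac{2}{m(m+1)}\big(N_{\Delta_f}(2)+b_1(\Sigma)\big)$, which on, say, the round sphere self-shrinker gives essentially $\tfrac2m$, far from the claimed $m+1$; so the $m+1$ has to be produced by a separate family of test functions.

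That family consists of the ``translational'' functions $u_v\doteq\scal vN$, $v\in\real{m+1}$. Using the self-shrinker equation in the form $H=-\scal xN$ (which gives $\nabla^{\Sigma}H=A(x^{\top})=-\nabla^{\Sigma}\scal xN$) together with the classical identity $\Delta\scal vN=\abs A^2\scal vN+\scal{\nabla^{\Sigma}H}{v^{\top}}$, a short computation gives $\Delta_fu_v=\abs A^2u_v$, hence
\[
L_fu_v=-u_v .
\]
Thus each $u_v$ is an eigenfunction of $L_f$ with the negative eigenvalue $-1$. Moreover $u_{e_1},\dots,u_{e_{m+1}}$ are linearly independent on a compact $\Sigma$: a relation $\sum_ic_iu_{e_i}=\scal cN\equiv0$ with $c\ne0$ would make $c$ an everywhere-tangent constant vector field, so that $\scal cx$ would be a function on $\Sigma$ with $\nabla^{\Sigma}\scal cx=c\ne0$, hence without critical points, contradicting compactness. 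Therefore $\mathcal K\doteq\mathrm{span}\{u_{e_1},\dots,u_{e_{m+1}}\}$ has dimension $m+1$, satisfies $L_f|_{\mathcal K}=-\mathrm{Id}$, and $Q_f$ is negative definite on it.

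Next I would combine the two contributions by peeling off $\mathcal K$. Since $L_f|_{\mathcal K}=-\mathrm{Id}$, the $L^2(e^{-f}\,d\mu)$-orthogonal complement $\mathcal K^{\perp}$ is $L_f$-invariant, so
\[
\mathrm{Ind}_f(\Sigma)=(m+1)+\mathrm{Ind}\big(L_f|_{\mathcal K^{\perp}}\big),
\]
and it suffices to show $\mathrm{Ind}(L_f|_{\mathcal K^{\perp}})\ge\tfrac2{m(m+1)}b_1(\Sigma)$. For this I would rerun the proof of Theorem~\ref{maincompact} keeping only the weighted-harmonic $1$-forms (a space of dimension $b_1(\Sigma)$): for $\omega$ weighted-harmonic and the $\binom{m+1}{2}=\tfrac{m(m+1)}2$ bivector fields $X_{V,W}=\scal{\bar V}{N}W-\scal{\bar W}{N}V$ built from the coordinate vector fields of $\real{m+1}$, the computation in that proof gives $\sum_{V,W}Q_f\big(\omega(X_{V,W})\big)<0$ whenever $\omega\ne0$; projecting the test functions $\omega(X_{V,W})$ onto $\mathcal K^{\perp}$ and applying the standard linear-algebra lemma (a $d$-dimensional subspace of $(\mathcal K^{\perp})^{\oplus k}$ on which the summed quadratic form is negative definite forces the index of $L_f|_{\mathcal K^{\perp}}$ to be at least $d/k$), with $d=b_1(\Sigma)$ and $k=\tfrac{m(m+1)}2$, yields the bound and hence \eqref{IndEst}. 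The case $m=2$ then follows from $b_1(\Sigma)=2g$, which gives $\tfrac2{m(m+1)}b_1(\Sigma)=\tfrac23g$.

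The delicate point — and the main obstacle — is the last step: projecting the test functions of Theorem~\ref{maincompact} onto $\mathcal K^{\perp}$ increases $Q_f$ by $\sum_{V,W}\norm{P_{\mathcal K}\big(\omega(X_{V,W})\big)}^2$, so one must check that the strict inequality $\sum_{V,W}Q_f(\cdot)<0$ is not destroyed; equivalently, one must verify that $Q_f$ is negative definite on $\mathcal K\oplus V$, where $V$ is the negative subspace furnished by the proof of Theorem~\ref{maincompact}, the mixed terms $Q_f(u_{e_i},v)=-\scal{u_{e_i}}{v}_f$ being annihilated precisely when $V\subseteq\mathcal K^{\perp}$. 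Concretely this amounts to estimating the pairings $\int_{\Sigma}\omega(X_{V,W})\scal{e_i}{N}\,e^{-f}\,d\mu$ against the quantitative negativity of the sum $\sum_{V,W}Q_f(\omega(X_{V,W}))$ coming out of the proof of Theorem~\ref{maincompact}; I expect no new geometric input is needed beyond the identity $L_f\scal vN=-\scal vN$ and the computations internal to that proof, only this bookkeeping.
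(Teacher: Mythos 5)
Your two building blocks are exactly the ones the paper uses: the translational functions $\scal{\bar V}{N}$ satisfy $L_f\scal{\bar V}{N}=-\scal{\bar V}{N}$ (this is \eqref{BasEq3} specialized to $\mathrm{Hess}f=g_{\rm can}$, since $\mathrm{Hess}f(V,N)=\scal{V}{N}=0$), they are linearly independent on a compact hypersurface by the height-function argument you give, and the $f$-harmonic $1$-forms produce the term $\frac{2}{m(m+1)}b_1(\Sigma)$. Where you diverge is in how the two families are combined, and that is precisely the step you leave unproved. The paper does not decompose $L^2(\Sigma_f)$ into $\mathcal K\oplus\mathcal K^{\perp}$ and does not project any test function: it invokes Theorem \ref{MainThmCompactfMin}, which for $\mu=1$ and $d(k)\le b_1(\Sigma)$ gives $\lambda_k(L_f)\le -2\mu+\lambda_{d(k)}(\Delta_f^{[1]})=-2$, because the first $b_1(\Sigma)$ eigenvalues of the Hodge $f$-Laplacian vanish. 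Since $-2<-1$, the $k$ eigenvalues that are $\le -2$ occupy positions in the ordered spectrum disjoint from the $m+1$ eigenvalues equal to $-1$, so the counts simply add and $\mathrm{Ind}_f(\Sigma)\ge k+m+1$. The quantitative gap between $-2\mu=-2$ and $-1$ is exactly what makes any orthogonalization unnecessary.

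That said, your flagged ``bookkeeping'' does close, and for the same reason. Since $L_f=-\mathrm{Id}$ on $\mathcal K$ and $P_{\mathcal K^{\perp}}u\perp\mathcal K$, one has $Q_f(P_{\mathcal K^{\perp}}u)=Q_f(u)+\norm{P_{\mathcal K}u}_f^2$; integrating over $\bar{\mathcal U}\times\bar{\mathcal U}$ as in the proof of Theorem \ref{MainThmCompactfMin}, with $\xi$ a nonzero $f$-harmonic vector field, gives $\int Q_f(u)\le -2\mu\cdot 2\norm{\xi}_f^2=-4\norm{\xi}_f^2$, while $\int\norm{P_{\mathcal K}u}_f^2\le\int\norm{u}_f^2=2\norm{\xi}_f^2$, so the projected test functions still yield a total $\le -2\norm{\xi}_f^2<0$ (and in particular they cannot all vanish). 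So your route can be completed with two extra lines, but as written the key inequality is asserted (``I expect no new geometric input is needed'') rather than verified, and the paper's spectral-counting argument reaches the same conclusion without introducing $\mathcal K^{\perp}$ at all. Everything else in your proposal -- the value $\mu=1$, the identity $L_f u_v=-u_v$, the independence of $u_{e_1},\dots,u_{e_{m+1}}$, and $b_1(\Sigma)=2g$ in dimension two -- is correct and matches the paper.
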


It should be said that self-shrinkers behave, in some respects, like closed minimal surfaces of $\sphere 3$. We remark that in that case we have (see \cite{Sav}):
$$
{\rm Ind}(\Sigma)\geq \dfrac g2+4.
$$

We adopt the method in \cite{Sav}, with a noteworthy modification. Instead of using the usual harmonic one-forms we employ the so-called {\it $f$-harmonic} one forms, which satisfy by definition the two conditions:
$$
d\omega=\delta_f\omega=0
$$
where $\delta_f\omega=\delta\omega+\omega(\nabla f)$ is the weighted  codifferential. This seems to be the natural approach in the weighted case, also because $f$-harmonic forms are those which minimize the functional
$$
\omega\mapsto \int_{\Sigma}\abs{\omega}^2e^{-f}\,d\mu
$$
restricted to a fixed cohomology class. By the Hodge decomposition (which continues to hold in the weighted case, see \cite{Bue}) the dimension of the space of harmonic $1$-forms equals the first Betti number of $\Sigma$. 

It should be pointed out that P. Zhu and W. Gan  also obtained an estimate in the spirit of the Theorem \ref{maincompact} above: a lower bound by the first Betti number is proved, but only assuming  an additional curvature condition (see Corollary 1.2 in \cite{ZG}). In that paper usual harmonic (and not $f$-harmonic) forms are being used. 
\bigskip

Adapting to this setting the approach used in \cite{ACS} we obtained similar estimates also in the case of compact $f$-minimal hypersurfaces in more general ambient weighted manifolds. The article \cite{IRS_II} containing these results will be available soon.
\bigskip

In the second part of the paper we consider the case where the immersion is complete, non-compact. Denote by ${\Cal H}^1_f(\Sigma)$ the space of $f$-harmonic one-forms $\omega$ which are square summable for the weighted measure:
$$
{\Cal H}^1_f(\Sigma)=\left\{\omega\in\Lambda^1(\Sigma): d\omega=\delta_f\omega=0, \,\, \int_{\Sigma}\abs{\omega}^2e^{-f}\,d\mu<+\infty\right\}.
$$

\begin{theoremA} Let $\Sigma$ be a complete, non-compact $f$-minimal hypersurface
of $(\real{m+1},g_{\rm can},e^{-f}\,d\mu)$ with ${\rm Ric}_f\geq\mu>0$. 
Then
$$
{\rm Ind}_f(\Sigma)\geq \dfrac{2}{m(m+1)}{\rm dim}{\Cal H}^1_f(\Sigma).
$$
In particular, if $\Sigma^m$ is a complete properly immersed self-shrinker, then
$$
{\rm Ind}_f(\Sigma)\geq \dfrac{2}{m(m+1)}\Big(m+1+{\rm dim}{\Cal H}^1_f(\Sigma)\Big).
$$
\end{theoremA}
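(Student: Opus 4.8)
The plan is to reproduce the test-function scheme behind Theorem \ref{maincompact} and Corollary \ref{MainCorCompactSS} on a compact exhaustion of $\Sigma$, multiplying the test functions by cut-offs and passing to the limit. We may assume $\mathrm{Ind}_f(\Sigma)=:d<\infty$, otherwise there is nothing to prove; then the Jacobi operator $L_f$, self-adjoint on $L^2(\Sigma,e^{-f}\,d\mu)$, has exactly $d$ negative eigenvalues spanning a finite-dimensional space $E_-$ of smooth eigenfunctions, and $Q_f(u,u)\ge 0$ for every $u$ in the form domain of $L_f$ that is $L^2(\Sigma,e^{-f}\,d\mu)$-orthogonal to $E_-$; since $\Sigma$ is complete, that form domain is the $W^{1,2}_f$-closure of $C^\infty_0(\Sigma)$. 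It therefore suffices to show $d\cdot\binom{m+1}{2}\ge\dim\mathcal F$ for every finite-dimensional subspace $\mathcal F$ of the space of \emph{admissible one-forms}
\[
\mathcal Z(\Sigma)=\mathcal H^1_f(\Sigma)\ \oplus\ \mathrm{span}\big\{\,d\psi:\ \Delta_f\psi=\lambda\psi,\ \psi\in L^2(\Sigma,e^{-f}\,d\mu),\ 0<\lambda<2\mu\,\big\},
\]
the two summands being $L^2(\Sigma,e^{-f}\,d\mu)$-orthogonal by the weighted Hodge decomposition \cite{Bue}; letting $\mathcal F$ increase gives $\mathrm{Ind}_f(\Sigma)\ge\frac{2}{m(m+1)}\dim\mathcal Z(\Sigma)\ge\frac{2}{m(m+1)}\dim\mathcal H^1_f(\Sigma)$, with $d=\infty$ forced when $\dim\mathcal H^1_f(\Sigma)=\infty$. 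For a properly immersed self-shrinker the drift Laplacian $\Delta_f$ has discrete spectrum and the restricted coordinate functions $x_1,\dots,x_{m+1}$ lie in $L^2(\Sigma,e^{-f}\,d\mu)$ (Euclidean volume growth) and satisfy $\Delta_f x_a=x_a$ with $1<2\mu=2$, so the one-forms $dx_1,\dots,dx_{m+1}$ belong to the second summand and are independent of $\mathcal H^1_f(\Sigma)$ unless the immersion is not full, i.e.\ unless $\Sigma$ is a hyperplane through the origin; hence $\dim\mathcal Z(\Sigma)\ge m+1+\dim\mathcal H^1_f(\Sigma)$, which yields the self-shrinker bound.

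First I would recall, from the proof of Theorem \ref{maincompact} in Subsection \ref{Subsec2.2}, the construction of \cite{Sav}: fix an orthonormal basis $e_1,\dots,e_{m+1}$ of $\real{m+1}$, let $E_a$ be the tangential part of $e_a$ along $\Sigma$, set $X_{ab}=\scal{e_a}{N}E_b-\scal{e_b}{N}E_a$, and for $\eta\in\mathcal Z(\Sigma)$ put $u^\eta_{ab}=\eta(X_{ab})$, $a<b$. Using the pointwise identities $\nabla\scal{e_a}{N}=-AE_a$, $\nabla_X E_a=\scal{e_a}{N}AX$, $\sum_a\scal{e_a}{N}E_a=0$, $\sum_a\scal{e_a}{N}^2=1$ (whence $\sum_{a<b}(u^\eta_{ab})^2=|\eta|^2$), the $f$-minimal equation \eqref{fminimal}, the closedness of $\eta$, the weighted Bochner identity for $\eta$, and the Gauss-type relation $\mathrm{Ric}^\Sigma_f(\cdot,\cdot)=\mathrm{Ric}_f(\cdot,\cdot)-|A\,\cdot|^2$ valid on $f$-minimal hypersurfaces of a flat weighted space, the compact argument gives, after integration by parts, that for $\eta\ne 0$
\[
\sum_{a<b}Q_f\big(u^\eta_{ab},u^\eta_{ab}\big)<0 :
\]
the negative term $-\int_\Sigma|A|^2|\eta|^2e^{-f}\,d\mu$ of the $|A|^2$-potential of $L_f$ exactly absorbs the bad contribution $\int_\Sigma|A\eta^\sharp|^2e^{-f}\,d\mu$ produced by the Gauss correction in the intrinsic Bochner term, while $\mathrm{Ric}_f\ge\mu>0$ and $\delta_f\eta=\lambda\psi$ with $\lambda<2\mu$ (and $\lambda=0$ on $\mathcal H^1_f$) keep the remainder strictly negative. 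I would carry out this pointwise computation and only use it in integrated form after the next step.

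The substantive new point — and the step I expect to be the main obstacle — is to certify that the $u^\eta_{ab}$ actually lie in the form domain of $L_f$, i.e.\ in $W^{1,2}_f(\Sigma)$, and that the identity above survives a cut-off limit. Since $|E_a|\le 1$ one has $|u^\eta_{ab}|\le 2|\eta|\in L^2(\Sigma,e^{-f}\,d\mu)$ for free, and for distance-based cut-offs $\phi_j$ ($\phi_j\equiv 1$ on $B_{R_j}$, $\mathrm{supp}\,\phi_j\subset B_{2R_j}$, $|\nabla\phi_j|\le C/R_j$, $R_j\uparrow\infty$) one gets
\[
\int_\Sigma|\nabla\phi_j|^2|\eta|^2e^{-f}\,d\mu\ \le\ \frac{C}{R_j^2}\int_{B_{2R_j}\setminus B_{R_j}}|\eta|^2e^{-f}\,d\mu\ \longrightarrow\ 0
\]
using only completeness and $|\eta|\in L^2(\Sigma,e^{-f}\,d\mu)$. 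Because $\nabla u^\eta_{ab}=(\nabla\eta)(X_{ab})+\eta(\nabla X_{ab})$ with $|\nabla X_{ab}|\lesssim|A|$, what remains is the a priori integrability $\int_\Sigma(|\nabla\eta|^2+|A|^2|\eta|^2)e^{-f}\,d\mu<\infty$, and the bound on $\int|\nabla\eta|^2e^{-f}\,d\mu$ follows from that on $\int|A|^2|\eta|^2e^{-f}\,d\mu$ by testing the weighted Bochner inequality against $\phi_j^2$, absorbing a small multiple of $\int\phi_j^2|\nabla\eta|^2e^{-f}\,d\mu$ and using $-\mathrm{Ric}^\Sigma_f(\eta,\eta)\le|A|^2|\eta|^2$. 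So everything reduces to showing $\int_\Sigma|A|^2|\eta|^2e^{-f}\,d\mu<\infty$; this is where essentially all the work beyond the compact case lies, and I would attack it through a weighted Bochner/Simons-type estimate for $|\eta|$ (a subsolution of $\mathcal L_f v=(|A|^2-\mu)v$), exploiting $\mathrm{Ric}_f\ge\mu>0$ and, in the self-shrinker case, the Euclidean volume growth and the attendant control on the weighted total curvature $\int_\Sigma|A|^2e^{-f}\,d\mu$. Granting this, all $\nabla\phi_j$-errors in $Q_f(\phi_j u^\eta_{ab},\phi_j u^\eta_{ab})$ vanish as $j\to\infty$, so $Q_f(\phi_j u^\eta_{ab},\phi_j u^\eta_{ab})\to Q_f(u^\eta_{ab},u^\eta_{ab})$ and $u^\eta_{ab}$ lies in the form domain.

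The argument then closes exactly as in the proof of Corollary \ref{MainCorCompactSS}. If $d\cdot\binom{m+1}{2}<\dim\mathcal F$, the linear map $\mathcal F\to\bigoplus_{a<b}E_-$, $\eta\mapsto(P_{E_-}u^\eta_{ab})_{a<b}$, has a nonzero kernel; for $\eta\ne 0$ in it every $u^\eta_{ab}$ is orthogonal to $E_-$, hence lies in the form domain and satisfies $Q_f(u^\eta_{ab},u^\eta_{ab})\ge 0$, contradicting the strict negativity of the sum. This proves $d\cdot\binom{m+1}{2}\ge\dim\mathcal F$, hence both displayed inequalities of the theorem once $\mathcal F$ exhausts $\mathcal Z(\Sigma)$ as in the first paragraph. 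The only case left out, $\Sigma$ a hyperplane through the origin, is checked directly: there $\mathcal H^1_f(\Sigma)=0$, $|A|\equiv 0$, $\mathrm{Ind}_f(\Sigma)=1$, and $\frac{2}{m(m+1)}(m+1)\le 1$ for $m\ge 2$.
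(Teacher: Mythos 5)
Your overall scheme (reduce to finite index, take the candidate space $\mathcal Z(\Sigma)=\mathcal H^1_f(\Sigma)\oplus\mathrm{span}\{d\psi\}$, form the test functions $\eta(X_{ab})$, average over parallel fields, and count dimensions against $\binom{m+1}{2}\,\mathrm{Ind}_f(\Sigma)$) is the same as in the paper's Theorem \ref{complete}. But there is a genuine gap exactly at the point you yourself flag as ``the main obstacle'': your argument needs each $u^\eta_{ab}$ to lie in the form domain of $L_f$, you reduce this to the a priori bound $\int_\Sigma|A|^2|\eta|^2e^{-f}\,d\mu<\infty$, and you do not prove that bound --- you only indicate you ``would attack it'' via a Simons-type estimate and control of the weighted total curvature $\int_\Sigma|A|^2e^{-f}\,d\mu$. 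Neither ingredient is available for a general $f$-minimal hypersurface with $\mathrm{Ric}_f\ge\mu>0$ (nor, without further hypotheses, for a general properly immersed self-shrinker), and even $\int_\Sigma|A|^2e^{-f}\,d\mu<\infty$ combined with $|\eta|\in L^2_f$ would not yield $\int_\Sigma|A|^2|\eta|^2e^{-f}\,d\mu<\infty$ absent an $L^\infty$ bound on $\eta$. As written, the proof is incomplete at its central analytic step.

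The paper shows that this obstacle can be bypassed rather than overcome. One never places $u=\scal{X_{\bar V,\bar W}}{\xi}$ itself in the form domain: the test functions are always the compactly supported $\phi_n u$ on an exhaustion by balls $B_{2n}$, and the exact identity of Lemma \ref{lemma_IBP} converts $Q_f(\phi_nu,\phi_nu)$ into $\int\phi_n^2\,uL_fu\,e^{-f}d\mu+\int u^2|\nabla\phi_n|^2e^{-f}d\mu$. After averaging over $\bar V,\bar W$, the first integral involves only $\mathrm{Hess}f(\xi,\xi)$, $\mathrm{Hess}f(N,N)|\xi|^2$ and $\scal{\Delta_f^{[1]}\xi}{\xi}$ (the $|A|^2u$ term in \eqref{deltafu} cancels identically against the potential of $L_f$, and the term $v$ averages to zero), while the error term is bounded by $(c^2/n^2)\|\xi\|^2_{L^2_f}$ using only $|u|\le C|\xi|$ and $|\nabla\phi_n|\le c/n$; no integrability of $|\nabla\xi|$ or of $|A|^2|\xi|^2$ is ever invoked. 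The contradiction is then extracted not from a single fixed $\eta$ but from a normalized sequence $\xi_n$ in the unit sphere of the finite-dimensional candidate space, chosen orthogonal to the first $\mathrm{Ind}_f(\Sigma)$ Dirichlet eigenfunctions on $B_{2n}$: the resulting estimate forces $\int_{B_n}|\xi_n|^2e^{-f}d\mu\to0$, which is impossible by compactness of that unit sphere (together with Lemma \ref{vusigma} for the $\scal{\Delta_f^{[1]}\xi_n}{\xi_n}$ term). To repair your write-up you should either adopt this cut-off-plus-normalization argument, or supply an actual proof of the missing integrability; the latter does not appear to be within reach by the tools you cite.
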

Adapting the well-known Farkas-Kra construction (see \cite{FK}) to the weighted situation we will prove in the last section:

\begin{theoremA}\label{twog} Let $\Sigma$ be a two-dimensional orientable, connected, complete surface. Then, for all $f\in C^{\infty}(\Sigma)$:
$$
{\rm dim}{\Cal H}^1_f(\Sigma)\geq 2g
$$
where $g$ is the genus of $\Sigma$. 
\end{theoremA}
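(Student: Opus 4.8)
\section*{Proof proposal for Theorem~\ref{twog}}

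\textbf{Overall strategy.} The plan is to reduce the weighted statement to the unweighted one via a conformal change, and then to invoke the classical construction of $2g$ linearly independent square-summable harmonic $1$-forms on an open surface (Farkas--Kra, \cite{FK}). The key observation is that a $1$-form $\omega$ is $f$-harmonic on $(\Sigma, g)$ if and only if it is (ordinary) harmonic on $(\Sigma, \tilde g)$ for a suitable conformal metric $\tilde g = e^{-2f}g$ in real dimension $2$. Indeed, on a surface, the Hodge star on $1$-forms is conformally invariant, so $d\omega = 0$ is unchanged and $\delta_{\tilde g}\omega = -\tilde\star\, d\, \tilde\star\, \omega$; a direct computation with $\tilde g = e^{-2f}g$ gives $\delta_{\tilde g}\omega = e^{2f}(\delta\omega + \omega(\nabla f)) = e^{2f}\delta_f\omega$, so the two codifferentials have the same kernel. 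Hence $\Cal H^1_f(\Sigma, g)$ and $\Cal H^1(\Sigma, \tilde g)$ coincide as \emph{subspaces of $\Lambda^1(\Sigma)$}, since on a surface $\abs{\omega}^2_{\tilde g}\,d\mu_{\tilde g} = \abs{\omega}^2_g\,d\mu_g$ (the pointwise norm picks up $e^{2f}$ and the area form picks up $e^{-2f}$), so the weighted $L^2$ condition $\int_\Sigma \abs{\omega}^2 e^{-f}\,d\mu_g < \infty$ must be matched not against $\tilde g$ directly but against a second conformal adjustment; I address this below.

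\textbf{The dimension count.} Once the identification with ordinary harmonic forms (for an appropriate metric) is in place, the argument runs as follows. Since $\Sigma$ is orientable of genus $g$, pick a canonical homology basis $a_1, b_1, \dots, a_g, b_g$ for $H_1(\Sigma_0)$ of a compact core $\Sigma_0 \subset \Sigma$ (if $\Sigma$ is already compact, $\Sigma = \Sigma_0$ and the result is immediate from Hodge theory, so assume $\Sigma$ non-compact). On an open Riemann surface one constructs, for each handle, a harmonic $1$-form with prescribed $(a_i, b_i)$-periods which is in $L^2$: this is precisely the content of the Farkas--Kra construction, which produces harmonic differentials of the first kind with compactly supported ``difference'' from a model, hence square-summable. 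These $2g$ forms are linearly independent because their period matrix on $\{a_i, b_i\}$ is (after normalization) the identity. Transporting back through the conformal identification, we obtain $2g$ linearly independent forms in $\Cal H^1_f(\Sigma, g)$, giving $\dim \Cal H^1_f(\Sigma) \geq 2g$.

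\textbf{Handling the weight and the $L^2$ condition carefully.} The one subtlety is that the weighted norm $\int_\Sigma \abs{\omega}^2 e^{-f}\,d\mu_g$ is \emph{not} conformally invariant in the naive way, so I cannot simply quote the unweighted $L^2$ theory for $\tilde g$. I propose two routes. The first, and cleaner one: work directly and show that the Farkas--Kra forms, which differ from an exact form outside a compact set, already lie in $L^2(e^{-f}d\mu_g)$ — but this requires control of $f$ at infinity, which we do not have, so it fails in general. The second, and the one I expect to use: observe that we are free to choose the conformal factor. Set $\tilde g = \varphi^2 g$ with $\varphi$ chosen so that simultaneously (i) the $f$-harmonicity is preserved — which, by the surface computation above, holds for \emph{any} conformal change since $d$ and the $f$-codifferential kernels are conformally stable once we also rescale $f$ appropriately — and (ii) $\abs{\omega}^2_{\tilde g}\, d\mu_{\tilde g}$ captures the weight. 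Concretely: on a surface $\abs{\omega}^2_{\tilde g}\,d\mu_{\tilde g} = \abs{\omega}^2_g\,d\mu_g$ is conformally invariant, so in fact $\Cal H^1_f(\Sigma, g)$ with the weight $e^{-f}$ is the space of $\omega$ with $d\omega = \delta_f\omega = 0$ and $\int_\Sigma \abs{\omega}^2 e^{-f}\,d\mu_g < \infty$; rewriting $e^{-f}\,d\mu_g = d\mu_{h}$ where $h = e^{-f}g$ (so $d\mu_h = e^{-f}\,d\mu_g$ in dimension $2$), and noting $\abs{\omega}^2_h = e^{f}\abs{\omega}^2_g$, we get $\int_\Sigma \abs{\omega}^2 e^{-f}\,d\mu_g = \int_\Sigma \abs{\omega}^2_h\, e^{-2f}\,d\mu_g \cdot$ — this still has a leftover $e^{-2f}$, confirming the weighted norm is genuinely not the $h$-norm. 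The correct statement is that $\delta_f = \delta_h$ on $1$-forms (the $h$-codifferential equals the weighted codifferential, by the computation in the first paragraph with $h = e^{-2f'}g$, $f' = f/2$), and that $\Cal H^1_f$ sits between the $L^2$ theories for $g$ and for $h$. Since genus is a topological invariant and the Farkas--Kra forms can be arranged to have \emph{arbitrarily small support} for their non-model part, one chooses a complete metric $h$ conformal to $g$ with respect to which these forms are $L^2$ — such a metric exists on any open surface by a standard exhaustion argument — and then the weighted $L^2(e^{-f}d\mu_g)$ membership follows on the compact core, while at infinity the forms are harmonic of the first kind and decay. \textbf{The main obstacle} is precisely this last point: verifying $L^2$-summability with respect to the prescribed weight $e^{-f}$, for a genuinely \emph{arbitrary} smooth $f$ on a \emph{possibly incomplete or geometrically wild} surface. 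I expect the resolution to be: invoke that on a surface one may choose the conformal representative and then use that the Farkas--Kra differentials differ from compactly supported data, so their $L^2$ norm over $\Sigma \setminus \Sigma_0$ is that of an honest harmonic form of the first kind on a finite-genus surface, which is finite by the classical theory — and the finiteness does not see the weight because outside a larger compact set the form is genuinely harmonic (not just $f$-harmonic) for the conformally modified metric where $e^{-f}$ has been absorbed. I would write the final proof by first doing the compact case trivially, then treating the non-compact case by fixing the conformal metric $h = e^{-2f}g$, checking $\delta_h = \delta_f$ on $\Lambda^1$, and then quoting \cite{FK} for the existence of $2g$ independent $L^2(d\mu_h)$ harmonic forms on the open surface $(\Sigma, h)$, and finally converting the $L^2(d\mu_h)$ bound into the weighted bound $\int \abs{\omega}^2 e^{-f} d\mu_g < \infty$ using that on the core this is a bounded distortion and at infinity the form is of the first kind.
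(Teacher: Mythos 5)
Your proposal rests on a computational claim that is false, and false precisely in dimension $2$. You assert that for $\tilde g=e^{-2f}g$ one has $\delta_{\tilde g}\omega=e^{2f}(\delta\omega+\omega(\nabla f))=e^{2f}\delta_f\omega$ on $1$-forms, so that $f$-harmonic $1$-forms coincide with the harmonic $1$-forms of a conformal metric. In fact, for $\tilde g=e^{2\varphi}g$ in dimension $n$ the codifferential on $1$-forms transforms as $\delta_{\tilde g}\omega=e^{-2\varphi}\bigl(\delta\omega-(n-2)\,\omega(\nabla\varphi)\bigr)$, and for $n=2$ the first-order term vanishes: $\delta_{\tilde g}\omega=e^{-2\varphi}\delta\omega$. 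This is the classical fact that harmonicity of $1$-forms on a surface depends only on the conformal class. Consequently no conformal change can produce the weighted codifferential $\delta_f=\delta+i_{\nabla f}$, whose kernel genuinely differs from that of $\delta$, and the reduction to the unweighted theory fails at the first step. The same conformal invariance dooms the norm comparison you struggle with and never resolve: $\int_\Sigma\abs{\omega}^2_{h}\,d\mu_{h}=\int_\Sigma\abs{\omega}^2_{g}\,d\mu_{g}$ for every $h$ conformal to $g$ on a surface, so the weighted norm $\int_\Sigma\abs{\omega}^2e^{-f}\,d\mu_g$ is not the $L^2$ norm of any conformal metric; your closing claim that ``$e^{-f}$ has been absorbed'' into the conformal factor asserts exactly what cannot happen.

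The paper's proof works directly in the weighted setting and is organized so that the $L^2_f$ issue never arises. To $g$ disjoint closed curves with connected complement one attaches \emph{compactly supported} closed $1$-forms $\eta_1,\dots,\eta_g$ with periods $\int_{\tilde\gamma_j}\eta_k=\delta_{jk}$ over dual curves; compact support puts them in $L^2_f$ for an arbitrary weight. One then projects onto the $f$-harmonic summand of Bueler's weighted Hodge decomposition $L^2_f(\Lambda^1)=A\oplus B_f\oplus\mathcal{H}^1_f(\Sigma)$, and the periods survive the projection because the correction terms are limits of exact forms, giving $g$ independent elements $\omega_j$. The second family is obtained by projecting $\star_f\eta_j$, where $\star_f\doteq e^{f}\star$ is a \emph{weighted} Hodge star satisfying $\delta_f\star_f\omega=\star_f d\omega$; this is the device that replaces the ordinary $\star$ of the unweighted Farkas--Kra argument, and it is needed exactly because, as explained above, the ordinary $\star$ does not preserve $f$-harmonicity. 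A final computation shows $(\omega_j,\zeta_k)_f=0$, so the span has dimension $2g$. If you want to repair your write-up, the fix is not a conformal change but this weighted star operator combined with the weighted Hodge projection of compactly supported representatives.
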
This implies the following inequalities.

\begin{corollaryA}\label{CorE} \item[ (a)] Let $\Sigma$ be a complete, non-compact $f$-minimal surface in $(\real{3},g_{\rm can},e^{-f}\,d\mu)$ with ${\rm Ric}_f\geq\mu>0$. Then
$$
{\rm Ind}_f(\Sigma)\geq \dfrac{2g}3.
$$
In particular, stable $f$-minimal surfaces have genus zero. 

\item[(b)] If $\Sigma^2$ is a complete self-shrinker, properly immersed in $\real 3$, then
$$
{\rm Ind}_f(\Sigma)\geq \dfrac{2g}3+1
$$
\end{corollaryA}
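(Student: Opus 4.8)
The plan is to read off Corollary~\ref{CorE} as a direct specialisation to surfaces of the two preceding results, namely Theorem~C (the index estimate in the complete non-compact case) and Theorem~\ref{twog}; no new analytic input is needed. For part (a), I start with a complete, non-compact $f$-minimal surface $\Sigma^2$ in $(\real 3,g_{\rm can},e^{-f}\,d\mu)$ with ${\rm Ric}_f\ge\mu>0$, and set $m=2$ in the inequality of Theorem~C:
$$
{\rm Ind}_f(\Sigma)\ \ge\ \frac{2}{m(m+1)}\,{\rm dim}\,{\Cal H}^1_f(\Sigma)\ =\ \frac13\,{\rm dim}\,{\Cal H}^1_f(\Sigma).
$$
Since the very definition of $H_f$ and of the quadratic form $Q_f$ presupposes a globally defined unit normal $N$, the surface $\Sigma$ is two-sided in the orientable space $\real 3$, hence orientable; assuming $\Sigma$ connected (otherwise one runs the argument on each connected component and adds up, using that both index and genus are additive over components), Theorem~\ref{twog} applies and gives ${\rm dim}\,{\Cal H}^1_f(\Sigma)\ge 2g$. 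Chaining the two inequalities yields ${\rm Ind}_f(\Sigma)\ge 2g/3$. If in addition $\Sigma$ is $f$-stable, then ${\rm Ind}_f(\Sigma)=0$, forcing $2g/3\le 0$, i.e.\ $g=0$; this is (a).

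For part (b), let $x:\Sigma^2\to\real 3$ be a complete, properly immersed self-shrinker and take the weight $f(x)=\abs{x}^2/2$. Then ${\rm Ric}_f={\rm Hess}\,f=g_{\rm can}$ on the ambient $\real 3$, so the curvature hypothesis of Theorem~C holds with $\mu=1$, and $\Sigma$ is connected and orientable by the very definition of a self-shrinker. Properness of the immersion is exactly what licenses the sharper, self-shrinker form of Theorem~C, which for $m=2$ reads
$$
{\rm Ind}_f(\Sigma)\ \ge\ \frac{2}{m(m+1)}\Big(m+1+{\rm dim}\,{\Cal H}^1_f(\Sigma)\Big)\ =\ 1+\frac13\,{\rm dim}\,{\Cal H}^1_f(\Sigma).
$$
Applying Theorem~\ref{twog} once more gives ${\rm dim}\,{\Cal H}^1_f(\Sigma)\ge 2g$, hence ${\rm Ind}_f(\Sigma)\ge 1+2g/3$, as claimed. (Should $\Sigma$ happen to be compact, the same bound follows a fortiori from Corollary~\ref{MainCorCompactSS}, since ${\rm dim}\,{\Cal H}^1_f(\Sigma)=b_1(\Sigma)$ in the compact case.)

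I do not expect a genuine obstacle: Corollary~\ref{CorE} is a formal consequence of results already in hand, and the work consists of the two substitutions $m=2$ displayed above. The only points deserving a word of justification are that the hypotheses of Theorem~\ref{twog} — completeness, connectedness, orientability — are all available (orientability for free from two-sidedness, completeness by assumption, connectedness either assumed or handled componentwise), and, in part (b), that the additive contribution $m+1=3$ coming from the orthogonal projections to $\Sigma$ of the parallel vector fields of $\real 3$ genuinely survives in the complete, properly immersed setting — which is precisely what the self-shrinker clause of Theorem~C provides.
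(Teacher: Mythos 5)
Your proposal is correct and follows essentially the same route as the paper: specialize the complete non-compact index estimate (Theorem C) to $m=2$ and combine it with Theorem \ref{twog}, which gives $\dim \mathcal{H}^1_f(\Sigma)\geq 2g$. The only (cosmetic) difference is in part (b): the paper deduces the self-shrinker bound from Corollary \ref{dimensiontwo}, whose hypothesis excludes hyperplanes, and therefore checks the hyperplane case separately (index $1$, genus $0$, so the bound holds trivially), whereas you invoke the self-shrinker clause of Theorem C as stated, which carries no such exclusion, so your argument is complete as written.
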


Ros proved the bound in (a) for usual complete minimal surfaces in $\real 3$ (see \cite{Ros}); this was later improved to ${\rm Ind}(\Sigma)\geq \frac23(g+r)-1$ by O. Chodosh and D. Maximo (\cite{ChMa}), where $r$ is the number of ends. 
\smallskip
The inequality in b)  improves the lower bound ${\rm Ind}_f(\Sigma)\geq \frac g3$  proved in \cite{McG2} under the additional condition that ${\rm sup}_{\Sigma}\abs{k_1^2-k_2^2}\leq\delta<1$ (here $k_1$ and $k_2$ are the principal curvatures of $\Sigma$). 


\section{A comparison theorem in the compact case} In the compact case, our estimates will be a consequence of a more general comparison result between the spectrum of the stability operator and that of the $f$-Laplacian acting on $1$-forms. 
Thus, we start by defining these operators.

\subsection{Hodge Laplacian and Weitzenb\"ock formula in the weighted setting}

Recall that the $f$-Laplacian of a weighted manifold $(M,g,e^{-f}\,d\mu)$, acting on functions,  is defined by:
$$
\Delta_fu=\Delta u+\scal{\nabla f}{\nabla u}.
$$
In general, if one introduces the weighted divergence
$$
\delta_f=\delta+i_{\nabla f},
$$
then one has simply $\Delta_fu=\delta_f du$. Note that $\Delta_f$ is self-adjoint with respect to the weighted measure $e^{-f}d\mu$. More generally, we have a Hodge $f$-Laplacian acting on $p$-forms, denoted $\Delta_f^{[p]}$ and  defined in the natural way:
$$
\Delta^{[p]}_f=d\delta_f+\delta_f d.
$$
As $M$ is compact, the Hodge Laplacian has a discrete spectrum $\{\lambda_k(\Delta^{[p]}_f)\}_{k=1,2,\dots}$. The important fact is that the Hodge decomposition continues to hold in this setting; therefore the dimension of the kernel of $\Delta^{[p]}_f$ is equal to the $p$-th Betti number, which means that
$$
\lambda_k(\Delta^{[p]}_f)=0\quad\text{for $k=1,\dots,b_p(M)$}.
$$
The following Lemma is well-known in the field; it gives an expression  of the  Hodge $f$-Laplacian in terms of the connection Laplacian and the Bakry-\'Emery Ricci tensor. Since we haven't found a proof in literature, we provide it below.
\begin{lemma}\label{LemfWeitz}
 Let $(M, g, e^{-f}d\mu)$ be a weighted manifold and $\omega\in C^{\infty}(\Lambda^{1}(M))$. Then
\begin{equation}\label{f-Weitz}
\Delta_{f}^{[1]}\omega=\nabla_{f}^{*}\nabla\omega+\mathrm{Ric}_{f}(\omega^{\sharp}),
\end{equation}
where $\Delta_{f}^{[1]}=\delta_{f}d+d\delta_{f}$, $\delta_{f}=\delta+i_{\nabla f}$, $\nabla_{f}^{*}=\nabla^{*}+i_{\nabla f}$, $\mathrm{Ric}_{f}=\mathrm{Ric}+\mathrm{Hess}f$.
\end{lemma}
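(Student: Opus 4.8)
The plan is to derive the weighted Weitzenböck formula \eqref{f-Weitz} from the classical (unweighted) Bochner--Weitzenböck identity by accounting for the extra terms produced by the drift $\nabla f$. Recall the classical identity on $1$-forms:
$$
\Delta^{[1]}\omega=(d\delta+\delta d)\omega=\nabla^*\nabla\omega+\mathrm{Ric}(\omega^\sharp),
$$
where $\nabla^*\nabla$ is the connection (Bochner) Laplacian. Since $\Delta^{[1]}_f=d\delta_f+\delta_f d=\Delta^{[1]}+d\,i_{\nabla f}+i_{\nabla f}\,d$ and $\nabla^*_f\nabla=\nabla^*\nabla+i_{\nabla f}\nabla$, subtracting the classical formula from the claimed one reduces everything to the pointwise identity
$$
(d\,i_{\nabla f}+i_{\nabla f}\,d)\omega=i_{\nabla f}\nabla\omega+(\mathrm{Hess}f)(\omega^\sharp),
$$
understood as: $i_{\nabla f}\nabla\omega$ means the contraction of the full covariant derivative $\nabla\omega\in C^\infty(T^*M\otimes T^*M)$ in its first slot against $\nabla f$, i.e. $(\nabla_{\nabla f}\omega)$.

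The key computational step is therefore to verify this Cartan-type identity. First I would fix a point $p$ and a local orthonormal frame $\{e_i\}$ that is parallel at $p$ (so all $\nabla_{e_i}e_j$ vanish at $p$), which kills Christoffel terms and makes the bookkeeping manageable. I would then expand the left-hand side: using the Cartan formula $\mathcal L_X=d\,i_X+i_X\,d$ for the Lie derivative on forms, one has $(d\,i_{\nabla f}+i_{\nabla f}\,d)\omega=\mathcal L_{\nabla f}\omega$. Now $\mathcal L_{\nabla f}\omega=\nabla_{\nabla f}\omega+\omega\circ(\nabla(\nabla f))$ as operators on tangent vectors — more precisely $(\mathcal L_X\omega)(Y)=(\nabla_X\omega)(Y)+\omega(\nabla_Y X)$ — and since $\nabla_Y(\nabla f)=(\mathrm{Hess}f)(Y)^\sharp$ (viewing the Hessian as a symmetric endomorphism), the term $\omega(\nabla_Y\nabla f)$ is precisely $(\mathrm{Hess}f)(\omega^\sharp)(Y)$ by symmetry of the Hessian. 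This yields exactly $\mathcal L_{\nabla f}\omega=\nabla_{\nabla f}\omega+(\mathrm{Hess}f)(\omega^\sharp)$, which is the required identity; adding back $\Delta^{[1]}\omega=\nabla^*\nabla\omega+\mathrm{Ric}(\omega^\sharp)$ gives \eqref{f-Weitz}.

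The main obstacle — really the only delicate point — is sign and convention consistency: one must pin down the sign conventions for $\delta$ (here $\delta=-\mathrm{div}$ in the paper's convention, matching $\Delta=-\mathrm{div}\nabla\geq 0$), for the musical isomorphisms, and for $\mathrm{Ric}$ acting on $1$-forms, and check that $i_{\nabla f}d\omega+d(i_{\nabla f}\omega)$ genuinely reproduces the Lie-derivative expression rather than its negative. I would double-check this by testing the formula on an exact form $\omega=dh$: then $d\omega=0$, $\delta_f\omega=\Delta_f h$, so the left-hand side is $d\Delta_f h$, while the right-hand side should be $\nabla^*_f\nabla(dh)+\mathrm{Ric}_f((dh)^\sharp)=\nabla^*_f\nabla(dh)+\mathrm{Ric}_f(\nabla h)$; commuting $d$ past $\Delta_f$ on functions and invoking the classical Bochner formula for $|\nabla h|^2$ confirms the identity and the signs. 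As an alternative proof route, should the frame computation get unwieldy, one can instead verify \eqref{f-Weitz} weakly by pairing both sides against an arbitrary test $1$-form $\eta$ with respect to the weighted measure $e^{-f}d\mu$, using that $d$ and $\delta_f$ are formal adjoints for this measure and that $\nabla$ and $\nabla^*_f$ are likewise adjoint, reducing the claim to the classical Weitzenböck formula plus the integrated Cartan identity; this is the approach I would actually write up, as it sidesteps Christoffel-symbol bookkeeping entirely.
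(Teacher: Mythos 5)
Your proposal is correct and follows essentially the same route as the paper: reduce to the classical Weitzenb\"ock formula, identify $d\,i_{\nabla f}+i_{\nabla f}\,d$ with $\mathcal L_{\nabla f}$ via Cartan's formula, and observe that $(\mathcal L_{\nabla f}\omega)(X)-(\nabla_{\nabla f}\omega)(X)=\omega(\nabla_X\nabla f)=\mathrm{Hess}f(\omega^{\sharp},X)$, which is precisely the computation in the paper's proof. Note that this argument is already coordinate-free, so the frame/weak-formulation detour you propose at the end is unnecessary.
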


\begin{proof}
Recall that by the classical Weitzenb\"ock formula, letting $\omega\in C^{\infty}(\Lambda^{1}(M))$, we have that
\[
\ \Delta^{[1]}\omega=\nabla^{*}\nabla\omega+\mathrm{Ric}(\omega^{\sharp}).
\]
Then
\begin{eqnarray*}
\Delta_{f}^{[1]}\omega&=&\left(\delta_{f}d+d\delta_{f}\right)\omega= \left(\delta+i_{\nabla f}d+d\delta+d\,i_{\nabla f}\right)\omega\\
&=&(\Delta^{[1]}+\mathcal{L}_{\nabla f})\omega=\nabla^{*}\nabla\omega+\mathrm{Ric}(\omega^{\sharp})+\mathcal{L}_{\nabla f}\omega\\
&=&\nabla_{f}^{*}\nabla \omega+\mathrm{Ric}(\omega^{\sharp})-i_{\nabla f}\nabla \omega+\mathcal{L}_{\nabla f}\omega,
\end{eqnarray*}
where of course $\mathcal L$ is the Lie derivative. Since, for every $X\in TM$,
\begin{eqnarray*}
-(i_{\nabla f}\nabla \omega)(X)+\left(\mathcal{L}_{\nabla f}\omega\right)(X)&=&-\nabla\omega(\nabla f, X)+(i_{\nabla f}d\omega)(X)+d(i_{\nabla f}\omega)(X)\\
&=&-\nabla\omega(\nabla f,X)+d\omega(\nabla f,X)+X(\omega(\nabla f))\\
&=& - \nabla\omega(X,\nabla f)+\nabla_X\omega(\nabla f)+\omega(\nabla_X\nabla f)\\
&=& \scal{\nabla_X\nabla f}{\omega^{\sharp}}
\end{eqnarray*}
and since $\scal{\nabla_X\nabla f}{\omega^{\sharp}}={\rm Hess}f(X,\omega^{\sharp})={\rm Hess}f(\omega^{\sharp},X)$ we see from the two previous facts that 
\begin{eqnarray*}
\Delta_{f}^{[1]}\omega(X)&=&\nabla_{f}^{*}\nabla \omega(X)+\mathrm{Ric}(\omega^{\sharp},X)+{\rm Hess}f(\omega^{\sharp},X)\\
&=&\nabla_{f}^{*}\nabla \omega(X)+\mathrm{Ric}_f(\omega^{\sharp},X)
\end{eqnarray*}
which is the assertion. 
\end{proof}

For notational purposes we will prefer to work with vector fields instead of one-forms. If $\xi$ is a vector field on $M$ consider its dual one-form $\xi^{\flat}$.
We define the Hodge $f$-Laplacian of $\xi$ as the unique vector field such that
$$
\scal{\Delta^{[1]}_f\xi}{X}=\Delta^{[1]}_f\xi^{\flat}(X)
$$
for all $X\in TM$.
The Weitzenb\"ock formula becomes:
$$
\Delta^{[1]}_f\xi=\nabla^{*}_f\nabla\xi+\mathrm{Ric}_f(\xi),
$$
where
$
\nabla_{f}^{*}\nabla\xi=\nabla^{*}\nabla\xi+\nabla_{\nabla f} \xi,
$
and, with respect to an orthonormal basis, 
\[
\nabla^{*}\nabla\xi=-\sum_{i}(\nabla_{e_i}\nabla_{e_i}\xi-\nabla_{\nabla_{e_i}e_i}\xi).
\]

\subsection{A comparison theorem and the proof of Theorem \ref{maincompact} and Corollary \ref{MainCorCompactSS}}\label{Subsec2.2}
Now let $\Sigma$ be a complete $f$-minimal hypersurface of the weighted manifold $M_{f}\doteq(\real{m+1},g_{\rm can}, e^{-f}d\mu)$.  As such, it inherits a structure of weighted manifold, the weight being simply the restriction of $f$ to $\Sigma$ which, by a slight abuse of language, we keep denoting by the same letter $f$. The stability operator of $\Sigma$ is then given by
$$
L_fu=\Delta_fu-({\rm Ric}^M_f(N,N)+\abs{A}^2)u,
$$
where $\Delta_f$ is the weighted Laplacian of $\Sigma$ and ${\rm Ric}^M_f$ is the Bakry-\'Emery Ricci tensor of the ambient weighted manifold; of course one has
$$
{\rm Ric}_{f}^M={\rm Hess}f,
$$
(the hessian of $f$, computed in $\real{m+1}$).
Similar to the case of minimal immersions in the standard sphere, we have the following comparison theorem between the spectrum of the stability operator and that of the Hodge $f$-Laplacian acting on $1$-forms. 

\begin{theorem}\label{MainThmCompactfMin} Let $M_{f}=(\real{m+1}, g_{\rm can}, e^{-f}d\mu)$ be a weighted manifold such that ${\rm Ric}^M_f\doteq {\rm Hess}f\geq \mu>0$.
Let $\Sigma^m\to\real{m+1}$ be a compact $f$-minimal hypersurface. Then, for all $k$:
$$
\lambda_k(L_f)\leq -2\mu+\lambda_{d(k)}(\Delta_f^{[1]})
$$
where $\Delta_f^{[1]}=\delta_{f}d+d\delta_{f}$ is the Hodge $f$-Laplacian acting on $1$-forms of $\Sigma$  and 
$$
d(k)=\binom{m+1}{2}(k-1)+1.
$$
\end{theorem}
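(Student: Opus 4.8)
The plan is to construct, for each one-form $\omega$ on $\Sigma$, a family of test-functions for the stability operator $L_f$ indexed by pairs of parallel vector fields in $\real{m+1}$, and then to compare dimensions via a linear-algebra counting argument of the type used by Savo in \cite{Sav}. Concretely, let $\bar V,\bar W$ be constant vector fields on $\real{m+1}$, let $V,W$ be their tangential projections onto $\Sigma$, and set $X_{V,W}=\scal{\bar V}{N}W-\scal{\bar W}{N}V$. For a fixed one-form $\omega$ on $\Sigma$ we take $u_{V,W}=\omega(X_{V,W})$. The space of such vector fields $X_{V,W}$ has dimension at most $\binom{m+1}{2}$, so each $\omega$ produces a space of test-functions of dimension at most $\binom{m+1}{2}$; this is where the number $d(k)$ in the statement comes from.

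The first key step is a pointwise Rayleigh-quotient estimate: I would compute $Q_f(u_{V,W},u_{V,W})$ and bound it in terms of the weighted Hodge energy of $\omega$. This requires the weighted Bochner/Weitzenböck formula of Lemma~\ref{LemfWeitz} (in its vector-field form $\Delta_f^{[1]}\xi=\nabla_f^*\nabla\xi+\mathrm{Ric}_f(\xi)$), together with the structure equations of the immersion $\Sigma\to\real{m+1}$: since the ambient space is flat, the tangential projections $V$ of parallel fields satisfy $\nabla_X V=\scal{\bar V}{N}AX$, which makes $V$ a conformal-type field whose covariant derivative is controlled by $A$. Summing the resulting identity over an orthonormal basis of the parameter space of $(\bar V,\bar W)$ should produce, after using $f$-minimality ($H_f=0$) and the hypothesis $\mathrm{Ric}_f^M=\Hess f\ge\mu>0$, an inequality of the shape $\sum_{V,W}Q_f(u_{V,W},u_{V,W})\le \big(-2\mu+\lambda(\omega)\big)\sum_{V,W}\int_\Sigma u_{V,W}^2e^{-f}\,d\mu$, where $\lambda(\omega)$ is the weighted Hodge eigenvalue (or Rayleigh quotient) of $\omega$. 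The constant $2\mu$ is the delicate point: it should emerge from combining the $\mathrm{Ric}_f(N,N)+|A|^2$ term in $L_f$ with the curvature term in the Weitzenböck formula for $\omega$ and with the algebraic identities relating $|A|^2$, $H_f$, and the second fundamental form acting on the projections $V$.

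The second step is the counting argument. Choose $\omega_1,\dots,\omega_k$ to be eigenforms of $\Delta_f^{[1]}$ for the first $k$ eigenvalues. Consider the linear map from the span of the $u_{\omega_j,V,W}$ (a space of dimension at most $\binom{m+1}{2}k$) into the direct sum of the first $d(k)-1$ eigenspaces of $L_f$ given by $L^2$-projection. If $\mathrm{Ind}_f(\Sigma)<d(k)$, this target has dimension $\le d(k)-1=\binom{m+1}{2}(k-1)$; since the source has dimension $>\binom{m+1}{2}(k-1)$ would not quite follow, one instead argues that some nonzero test-function lies in the span of eigenfunctions of $L_f$ with eigenvalue $\ge\lambda_{d(k)}(L_f)$, forcing its Rayleigh quotient to be at least $\lambda_{d(k)}(L_f)$, while the pointwise estimate forces it to be at most $-2\mu+\lambda_k(\Delta_f^{[1]})$. (One must check the test-functions are not identically zero; this uses that $\omega$ nonzero implies $\omega(X_{V,W})$ nonzero for suitable $\bar V,\bar W$, a pointwise fact about the surjectivity of $(\bar V,\bar W)\mapsto X_{V,W}$ away from the zero set of $\omega$.) Combining gives $\lambda_{d(k)}(L_f)\le -2\mu+\lambda_k(\Delta_f^{[1]})$, which is the claimed inequality.

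I expect the main obstacle to be the first step — the precise pointwise/integrated Bochner computation that yields exactly the constant $2\mu$ rather than $\mu$ or $c\mu$ for some other $c$. In the unweighted spherical case of \cite{Sav} the analogous constant comes from a careful bookkeeping of the Simons-type identity and the splitting of $|A|^2$; here one must track in addition all the terms involving $\nabla f$, $\Hess f$, and the weighted adjoints $\delta_f,\nabla_f^*$, and verify that the cross-terms reassemble into $\mathrm{Ric}_f$ and into the full stability potential. Once this identity is in hand, the dimension count is essentially formal.
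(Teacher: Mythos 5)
Your first step is, in outline, exactly what the paper does: the test functions $u=\scal{X_{\bar V,\bar W}}{\xi}$, the weighted Weitzenb\"ock formula of Lemma \ref{LemfWeitz}, the structure equation $\nabla_XV=\scal{\bar V}{N}AX$, and an averaging over pairs of unit parallel fields which yields $\sum_{\bar V,\bar W}\int_\Sigma uL_fu\,e^{-f}d\mu=-2\int_\Sigma\big(\mathrm{Hess}f(\xi,\xi)+\mathrm{Hess}f(N,N)\abs{\xi}^2\big)e^{-f}d\mu+2\int_\Sigma\scal{\xi}{\Delta_f^{[1]}\xi}e^{-f}d\mu$ together with $\sum_{\bar V,\bar W}\int_\Sigma u^2e^{-f}d\mu=2\int_\Sigma\abs{\xi}^2e^{-f}d\mu$; the constant $2\mu$ comes from the two Hessian terms contributing $\mu$ each, the $\abs{A}^2$ terms cancelling via the Gauss-type identity $\mathrm{Ric}_f^{\Sigma}=\mathrm{Hess}f-A^2$. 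So the analytic half of your plan is sound, if unexecuted.

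The counting step, however, has a genuine gap, in two respects. First, you conclude with $\lambda_{d(k)}(L_f)\leq-2\mu+\lambda_k(\Delta_f^{[1]})$ and call it ``the claimed inequality''; it is not --- the statement is $\lambda_k(L_f)\leq-2\mu+\lambda_{d(k)}(\Delta_f^{[1]})$, with $k$ and $d(k)$ in the opposite positions, and your version is strictly stronger and not reachable by this method. Second, and more fundamentally, the inequality produced in your first step controls only the \emph{average} over $(\bar V,\bar W)$ of $\int uL_fu$, not the Rayleigh quotient of any individual $u_{V,W}$ or of a generic element of their span; so the move ``some nonzero test-function lies in the span of high eigenfunctions of $L_f$, forcing its Rayleigh quotient to be at least $\lambda_{d(k)}(L_f)$ while the estimate forces it to be at most $-2\mu+\lambda$'' is not justified. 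The correct device --- the paper's --- is to put the unknown in the Hodge side: let $\xi$ range over the direct sum $E^{d(k)}$ of the first $d(k)$ eigenspaces of $\Delta_f^{[1]}$ and impose $\int_\Sigma\scal{X_{\bar V,\bar W}}{\xi}\varphi_ie^{-f}d\mu=0$ for $i=1,\dots,k-1$ and all $\bar V,\bar W$. By skew-symmetry in $(\bar V,\bar W)$ this is a system of $\binom{m+1}{2}(k-1)$ linear conditions on $\xi$, hence admits a nontrivial solution; for such $\xi$ \emph{every} $u_{V,W}$ is admissible in the min-max characterization of $\lambda_k(L_f)$, so $\lambda_k(L_f)\int u^2e^{-f}d\mu\leq\int uL_fu\,e^{-f}d\mu$ for every pair, and summing over $(\bar V,\bar W)$ and dividing by $2\int\abs{\xi}^2e^{-f}d\mu>0$ gives the claimed bound (this also disposes of the non-vanishing worry you raise, since only $\xi\neq0$ is needed).
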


We will prove the theorem in the next section. 

Now let
$$
\beta=\#\{\text{eigenvalues of $\Delta^{[1]}_f$ which are less than $2\mu$}\}.
$$
If $k$ is the largest integer such that $d(k)\leq\beta$, one sees from the above inequality that $\lambda_k(L_f)<0$, so that ${\rm Ind}_f(\Sigma)\geq k$. It is easy to estimate that $k\geq \frac{2}{m(m+1)}\beta$. Then the theorem gives:
\begin{equation}\label{beta}
{\rm Ind}_f(\Sigma)\geq \dfrac{2}{m(m+1)}\beta.
\end{equation}
{\it Proof of  Theorem \ref{maincompact}}. Let $\gamma=N_{\Delta_f}(2\mu)$ be the number of positive eigenvalues of the $f$-Laplacian which are less than $2\mu$. 
Let $u_1,\dots,u_{\gamma}$ be $L^2_f$-orthogonal eigenfunctions of $\Delta_f$ associated to positive eigenvalues which are less than $2\mu$. Obviously, they are all orthogonal to constants. As $\Delta^{[1]}_f$ commutes with exterior differentiation $d$, the $1$-forms
$du_1,\dots,du_{\gamma}$ form, by Stokes formula,  an orthogonal set of eigenforms  of $\Delta^{[1]}_f$ associated to positive eigenvalues less than $2\mu$. As they are all orthogonal to the space of $f$-harmonic $1$-forms, we see that
$
\beta\geq \gamma+b_1(\Sigma),
$
hence, from \eqref{beta}:
$$
{\rm Ind}_f(\Sigma)\geq \dfrac{2}{m(m+1)}\Big(N_{\Delta_f}(2\mu)+b_1(\Sigma)\Big)
$$
which is the statement of Theorem \ref{maincompact}.

\medskip

{\it Proof of Corollary \ref{MainCorCompactSS}}. Reasoning as before, now let $k=\frac{2}{m(m+1)}b_1(\Sigma)$. Then $d(k)\leq b_1(\Sigma)$ and we see from the theorem that $\lambda_k(L_f)\leq -2$. This means that there are at least $k$ eigenvalues of $L_f$ which are less than or equal to $-2$. Note now that by \eqref{BasEq3} below, for any self-shrinker, we have at least $m+1$ eigenvalues equal to $-1$. Therefore, the index is at least $k+m+1$ and the assertion follows.


\section{Proof of Theorem \ref{MainThmCompactfMin}}

\subsection{The test-functions and the main computational lemma}
Then let $\Sigma$ be an hypersurface of $(\real{m+1}, g_{\rm can},e^{-f}\,d\mu)$. Here and in the rest of the paper, we will denote by $\overline{\mathcal{P}}$ the set of parallel vector fields on $\real{m+1}$, and by $V$ the orthogonal projection of the parallel field $\bar V\in \overline{\mathcal{P}}$ on $\Sigma$, so that we have
$$
\bar V=V+\scal{\bar V}{N}N.
$$
To a pair of parallel vector fields $\bar V,\bar W\in \overline{\mathcal P}$ we associate the vector field on $\Sigma$ defined by $X_{\bar{V},\bar{W}}=\<\bar{V},N\>W-\<\bar{W},N\>V$. We obtain a family of test functions for the weighted Jacobi operator by pairing $X_{\bar{V},\bar{W}}$ with a vector field $\xi\in T\Sigma$:
\begin{equation}\label{tf}
u=\left\langle X_{\bar{V},\bar{W}},\xi\right\rangle.
\end{equation}
Typically,  $\xi$ will be a $f$-harmonic vector field, or an eigenvector field of the Hodge $f$-Laplacian. The scope of the following Lemma is to give an explicit expression of the stability operator when applied to $u$.

\begin{lemma}\label{Lem3} Let $f\in C^{\infty}(\real{m+1})$ and let $x:\Sigma^{m}\to\real{m+1}$ be an $f$-minimal hypersurface.
Let $\xi\in T\Sigma$ be a generic vector field on $\Sigma$ and $u$ the function defined in \eqref{tf}. Then
\[
\ L_{f} u=-u\mathrm{Hess}f(N,N)-\mathrm{Hess}f(X_{\bar{V},\bar{W}},\xi)+\<X_{\bar{V},\bar{W}},\Delta_{f}^{[1]}\xi\>+v,
\]
where $v=2(\<\nabla_{AV}\xi,W\>-\<\nabla_{AW}\xi, V\>)-\<W,\xi\>\mathrm{Hess}f(V,N)+\<V,\xi\>\mathrm{Hess}f(W,N)$.

\medskip

If $\Sigma$ is a self-shrinker (so that ${\rm Hess}f=g_{\rm can}$) then
$$
L_fu=-2u+\<X_{\bar{V},\bar{W}},\Delta_{f}^{[1]}\xi\>+v,
$$
where
$
v=2(\<\nabla_{AV}\xi,W\>-\<\nabla_{AW}\xi, V\>).
$
\end{lemma}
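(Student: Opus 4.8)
The plan is to compute $L_f u = \Delta_f u - \big(\mathrm{Hess}f(N,N) + |A|^2\big)u$ by first expanding $\Delta_f u$, where $u = \langle X_{\bar V,\bar W},\xi\rangle$ and $X_{\bar V,\bar W} = \langle \bar V,N\rangle W - \langle\bar W,N\rangle V$. The natural first step is to recall, for a single parallel field $\bar V = V + \langle\bar V,N\rangle N$ on $\real{m+1}$, the standard structure equations for its tangential part $V$ and normal component: since $\nabla^{\real{m+1}}\bar V = 0$, one gets $\nabla_X V = \langle\bar V,N\rangle AX$ and $X\langle\bar V,N\rangle = -\langle AX, V\rangle$ for all $X\in T\Sigma$ (these are the weighted analogue of what is used in \cite{Sav}, and they do not involve $f$). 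From these, the covariant derivative $\nabla_X X_{\bar V,\bar W}$ can be written out, and then the Hessian/rough Laplacian of $X_{\bar V,\bar W}$, bringing in second derivatives of $A$ and curvature terms. The key algebraic simplification, exactly as in the spherical case, is that the ambient space is flat, so the Codazzi equation makes $\nabla A$ fully symmetric, and the Gauss equation expresses intrinsic curvature purely through $A$; the $f$-minimal condition $H + \langle\nabla f, N\rangle = 0$ (equivalently $\mathrm{tr}A = -\mathrm{Hess}f(N,\cdot)^{\sharp}$ contracted appropriately) is what lets the $H$-terms be traded for Hessian-of-$f$ terms.

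Next I would assemble $\Delta_f u$. Writing $u = \langle X_{\bar V,\bar W},\xi\rangle$, one uses the product rule for $\Delta_f$ acting on an inner product of two vector fields:
$$
\Delta_f\langle Y,\xi\rangle = \langle \Delta_f^{\nabla} Y,\xi\rangle + \langle Y,\Delta_f^{\nabla}\xi\rangle - 2\sum_i\langle\nabla_{e_i}Y,\nabla_{e_i}\xi\rangle,
$$
where $\Delta_f^{\nabla} = \nabla_f^*\nabla = \nabla^*\nabla + \nabla_{\nabla f}$ is the weighted connection Laplacian, and then invoke the weighted Weitzenböck formula of Lemma \ref{LemfWeitz} (in its vector-field form) to replace $\Delta_f^{\nabla}\xi = \Delta_f^{[1]}\xi - \mathrm{Ric}_f(\xi)$, with $\mathrm{Ric}_f$ here the intrinsic Bakry-Émery Ricci tensor of $\Sigma$. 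The Gauss equation relates the intrinsic Ricci to the ambient one (which vanishes) plus $A$-terms: $\mathrm{Ric}^\Sigma(\xi) = H A\xi - A^2\xi$, and combined with $\mathrm{Hess}^\Sigma f$ versus $\mathrm{Hess}^{\real{m+1}} f$ this produces the $|A|^2 u$ term and the $\mathrm{Hess}f(X_{\bar V,\bar W},\xi)$ term that appear in the claimed formula. The cross term $-2\sum_i\langle\nabla_{e_i}X_{\bar V,\bar W},\nabla_{e_i}\xi\rangle$, after substituting $\nabla_{e_i}V = \langle\bar V,N\rangle A e_i$ etc., collapses — using $\langle A e_i, V\rangle = -e_i\langle\bar V,N\rangle$ and resumming over $i$ — precisely into the $2(\langle\nabla_{AV}\xi,W\rangle - \langle\nabla_{AW}\xi,V\rangle)$ piece of $v$, while the $\Delta_f^{\nabla}X_{\bar V,\bar W}$ term, which involves $\Delta_f$ of the coefficients $\langle\bar V,N\rangle$, supplies the remaining $\mathrm{Hess}f(V,N)$, $\mathrm{Hess}f(W,N)$ pieces of $v$ and the $-u\,\mathrm{Hess}f(N,N)$ term, after one uses that $\Delta_f\langle\bar V,N\rangle$ can be computed from the simple Codazzi-type identity $\nabla\langle\bar V,N\rangle = -AV$ together with $H_f = 0$.

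The self-shrinker case is then immediate: substituting $\mathrm{Hess}f = g_{\mathrm{can}}$ gives $\mathrm{Hess}f(N,N) = 1$, so $-u\,\mathrm{Hess}f(N,N) = -u$; $\mathrm{Hess}f(X_{\bar V,\bar W},\xi) = \langle X_{\bar V,\bar W},\xi\rangle = u$, so the middle term is $-u$, yielding $-2u$; and $\mathrm{Hess}f(V,N) = \langle V,N\rangle = 0$, $\mathrm{Hess}f(W,N) = \langle W,N\rangle = 0$ since $V,W$ are tangent to $\Sigma$ and $N$ is normal, so $v$ reduces to $2(\langle\nabla_{AV}\xi,W\rangle - \langle\nabla_{AW}\xi,V\rangle)$, exactly as stated.

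I expect the main obstacle to be the bookkeeping in the cross term and in $\Delta_f^{\nabla}X_{\bar V,\bar W}$: one must carefully track how the derivatives of the scalar coefficients $\langle\bar V,N\rangle$, $\langle\bar W,N\rangle$ interact with the derivatives of $V$, $W$ and of $\xi$, and verify that the numerous $\langle AV,\xi\rangle$-type and $\langle\nabla A(\cdots),\xi\rangle$-type terms cancel so cleanly against the $\mathrm{Ric}_f$ contribution from Weitzenböck. The flatness of $\real{m+1}$ (so all ambient curvature terms drop) and the symmetry of $\nabla A$ from Codazzi are what make this cancellation possible, exactly as in \cite{Sav}; the only genuinely new feature is carrying the extra $\nabla_{\nabla f}$ terms through every step and recognizing at the end that they reorganize into the Hessian-of-$f$ expressions using the $f$-minimality equation.
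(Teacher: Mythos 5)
Your plan is correct and follows essentially the same route as the paper: the paper also reduces $L_f u$ to a computation of $\Delta_f u$ carried out via the structure equations $\nabla_X V=\langle\bar V,N\rangle AX$, $\nabla\langle\bar V,N\rangle=-AV$, the product rule for $\Delta_f$, the weighted Weitzenb\"ock formula, the identity $\mathrm{Ric}_f^\Sigma(\xi)=\mathrm{Hess}f(\xi)-A^2\xi$ coming from Gauss--Codazzi and $H_f=0$, and the Jacobi-type equation for $\Delta_f\langle\bar V,N\rangle$ (the paper packages these as Lemmas \ref{LemParVectFields} and \ref{Lem2}, identity \eqref{deltafu}). The only difference is organizational --- you apply the product rule directly to $\langle X_{\bar V,\bar W},\xi\rangle$ rather than first to the scalar products $\langle\bar V,N\rangle\langle W,\xi\rangle$ --- and your handling of the self-shrinker specialization matches the paper's.
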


\begin{proof} Recall that 
$$
L_fu=\Delta_fu-({\rm Ric}^M_f(N,N)+\abs{A}^2)u=\Delta_fu-({\rm Hess} f(N,N)+\abs{A}^2)u.
$$ 
The assertion now follows from \eqref{deltafu}, which will be proved in the next section. 
\end{proof}


\subsection{The basic equations and the proof of Lemma \ref{Lem3}}
In the next two lemmas we collect some preliminary computations which will be used in the proof of our main results. 
\begin{lemma}\label{LemParVectFields}
Let $f\in C^{\infty}(\real{m+1})$ and let $x:\Sigma^{m}\to\real{m+1}$ be an $f$-minimal hypersurface. Denote by $N$ the unit normal vector and by $A$ the second fundamental form of $\Sigma$. Let $\bar{V}\in\overline{\mathcal{P}}$ and denote by  $V$ its projection on $\Sigma$. If $X\in T\Sigma$  one has
\begin{eqnarray}
\nabla_{X}V&=&\left\langle\bar{V},N\right\rangle AX;\label{BasEq1}\\
\nabla\left\langle\bar{V},N\right\rangle&=&-AV;\label{BasEq2}\\
\nabla_{f}^{*}\nabla V&=&A^2 V+ \<\bar{V},N\>\mathrm{Hess}f(N)^T.\label{BasEqV}
\end{eqnarray}
\end{lemma}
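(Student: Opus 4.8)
The three identities in Lemma \ref{LemParVectFields} are all local computations built on the two structure equations of the immersion (Gauss and Weingarten) together with the fact that $\bar V$ is parallel in $\real{m+1}$. The plan is to decompose $\bar V = V + \langle \bar V, N\rangle N$, differentiate in a tangent direction $X$, and separate tangential and normal parts. Since $\bar\nabla_X \bar V = 0$ in Euclidean space (where $\bar\nabla$ is the flat connection), we get
\[
0 = \bar\nabla_X V + X\big(\langle \bar V, N\rangle\big) N + \langle \bar V, N\rangle \bar\nabla_X N.
\]
Now use $\bar\nabla_X V = \nabla_X V + \langle A X, V\rangle N$ (Gauss formula, since $\Sigma$ has codimension one and the second fundamental form in the normal direction is $\langle AX, \cdot\rangle$) and $\bar\nabla_X N = -AX$ (Weingarten, by the sign convention $AX = -\nabla_X N$ fixed in the introduction). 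Taking the tangential component of the displayed equation yields $\nabla_X V = \langle \bar V, N\rangle AX$, which is \eqref{BasEq1}; taking the normal component and rewriting $X(\langle \bar V, N\rangle) = \langle \nabla \langle \bar V, N\rangle, X\rangle$ gives $\langle \nabla\langle \bar V, N\rangle, X\rangle = -\langle AV, X\rangle$ for all $X$, i.e. \eqref{BasEq2}. Note $f$-minimality is not needed for these first two; it will enter only in the third identity.

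For \eqref{BasEqV} I would first compute the rough (connection) Laplacian $\nabla^*\nabla V$ and then add the drift term $\nabla_{\nabla f} V$ to pass to $\nabla_f^*\nabla V$. Working at a point with a geodesic frame $\{e_i\}$ (so $\nabla_{e_i}e_i = 0$ there), differentiate \eqref{BasEq1} once more: $\nabla_{e_i}\nabla_{e_i} V = e_i(\langle \bar V, N\rangle) A e_i + \langle \bar V, N\rangle (\nabla_{e_i} A) e_i$. Summing over $i$, the first group of terms is $\sum_i \langle \nabla\langle\bar V,N\rangle, e_i\rangle Ae_i = A(\nabla\langle \bar V, N\rangle) = -A(AV) = -A^2 V$ by \eqref{BasEq2}. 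For the second group I would invoke the Codazzi equation in $\real{m+1}$: since the ambient curvature vanishes, $(\nabla_X A)Y$ is symmetric in $X,Y$, so $\sum_i (\nabla_{e_i}A)e_i = \nabla(\tr A) = \nabla H$ — and here is exactly where $f$-minimality enters, because $H = H_f - \langle \nabla f, N\rangle = -\langle \nabla f, N\rangle$, so $\nabla H = -\nabla\langle \nabla f, N\rangle$. This last gradient is computed just as in \eqref{BasEq2} but with the ambient gradient field $\nabla f$ in place of the parallel field $\bar V$, except that $\nabla f$ is not parallel; one finds $\langle \nabla\langle \nabla f, N\rangle, X\rangle = \Hess f(N, X) - \langle A(\nabla f)^T, X\rangle$ or similar (I'd track the exact combination carefully). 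Meanwhile the drift term is $\nabla_{\nabla f^T} V = \langle \bar V, N\rangle A(\nabla f^T)$ by \eqref{BasEq1} applied with $X = (\nabla f)^T$. Combining $-\nabla^*\nabla V = \sum_i \nabla_{e_i}\nabla_{e_i} V$ with the drift, the $A(\nabla f^T)$ contributions should cancel and leave precisely $\nabla_f^*\nabla V = A^2 V + \langle \bar V, N\rangle \Hess f(N)^{\TTT}$, where $(\cdot)^{\TTT}$ is the tangential projection.

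\textbf{Main obstacle.} The genuinely delicate step is the bookkeeping in \eqref{BasEqV}: one must correctly split $\nabla f$ into tangential and normal parts, differentiate $\langle \nabla f, N\rangle$ along $\Sigma$ (which brings in $\Hess f$ evaluated on mixed tangential/normal arguments and a Weingarten term $A(\nabla f)^T$), and verify that the Weingarten-type contributions arising from $\nabla H$ exactly cancel against the drift term $\nabla_{\nabla f}V$, so that only the normal component $\langle \bar V, N\rangle \Hess f(N)^{\TTT}$ survives. Sign conventions (the $AX = -\nabla_X N$ convention, and the sign in the rough Laplacian $\nabla^*\nabla = -\sum_i(\nabla_{e_i}\nabla_{e_i} - \nabla_{\nabla_{e_i}e_i})$) must be threaded through consistently; a single sign slip would corrupt the final index estimate. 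I would do this computation at a fixed point in a normal frame to kill the $\nabla_{e_i}e_i$ terms, and I would double-check the $f$-minimal case against the self-shrinker specialization $\Hess f = g_{\mathrm{can}}$, where \eqref{BasEqV} should reduce to $\nabla_f^*\nabla V = A^2 V + \langle \bar V, N\rangle N^{\TTT} = A^2 V$, a clean sanity check.
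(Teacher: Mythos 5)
Your proposal is correct and follows essentially the same route as the paper: both derive \eqref{BasEq1}--\eqref{BasEq2} by differentiating the decomposition $\bar V=V+\langle\bar V,N\rangle N$ and splitting into tangential and normal parts, and both obtain \eqref{BasEqV} by differentiating \eqref{BasEq1}, applying the contracted Codazzi identity to produce $\nabla H$, computing $\nabla H=-\mathrm{Hess}f(N)^{T}+A\nabla f$ from the $f$-minimal equation, and observing that the $A\nabla f$ term cancels against the drift $\nabla_{\nabla f}V$. The one step you flagged as needing care, $\langle\nabla\langle\nabla f,N\rangle,X\rangle=\mathrm{Hess}f(N,X)-\langle A(\nabla f)^{T},X\rangle$, is exactly right as written.
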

\begin{proof}
Let $\bar\nabla$ denote the Levi-Civita connection on $\real{m+1}$. We have that
\begin{align*}
\nabla_{X}V&=\left(\overline{\nabla}_{X}V\right)^{T}=-\left(\overline{\nabla}_{X}\left\langle\bar{V}, N\right\rangle N\right)^{T}=-\left\langle\bar{V}, N\right\rangle\left(\overline{\nabla}_{X}N\right)\\
&=\left\langle\bar{V}, N\right\rangle AX,
\end{align*}
and, for all tangent vectors $X$:
\begin{equation*}
\left\langle\nabla\left\langle\bar{V},N\right\rangle,X\right\rangle=X\left\langle\bar{V},N\right\rangle=\left\langle\bar{V},\overline{\nabla}_{X}N\right\rangle=-\left\langle AV,X\right\rangle.
\end{equation*}
As for \eqref{BasEqV}, note that $\nabla_{e_i}V=\scal{\bar V}{N}Ae_i$. Then:
\begin{align*}
\nabla^*\nabla V&=-\sum_i(\nabla_{e_i}(\<\bar{V},N\>Ae_i)-\<\bar{V},N\>A{\nabla_{e_i}}e_i)\\
&=-A\nabla \<\bar{V},N\>-\<\bar{V},N\>\mathrm{tr}\nabla A\\
&=A^2 V- \<\bar{V},N\>\nabla H.
\end{align*}
Moreover, using the $f$-minimal equation, it is not difficult to show that
\begin{equation}\label{NablaH}
\nabla H=-\mathrm{Hess}f(N)^T+A\nabla f.
\end{equation}
Hence we get
\begin{align*}
\nabla^*\nabla V&=A^2 V+ \<\bar{V},N\>\mathrm{Hess}f(N)^T- \<\bar{V},N\>A\nabla f\\
&=A^2 V+ \<\bar{V},N\>\mathrm{Hess}f(N)^T- \nabla_{\nabla f} V.
\end{align*}
\end{proof}

\begin{lemma}\label{Lem2}
Let $f\in C^{\infty}(\real{m+1})$ and let $x:\Sigma^{m}\to\real{m+1}$ be an $f$-minimal hypersurface. Let $\bar{V},\,\bar{W}\in\overline{\mathcal{P}}$, $V,\,W$ their projections on $\Sigma$. Then, for any $\xi\in T\Sigma$
\begin{align}
\Delta_{f}\left\langle\bar{V},N\right\rangle=&|A|^2\left\langle\bar{V},N\right\rangle-\mathrm{Hess}f(V,N);\label{BasEq3}\\
\Delta_{f}\left\langle V,\xi\right\rangle=&-\mathrm{Hess}f(V,\xi)+\langle \bar{V},N\rangle\mathrm{Hess}f(N,\xi)+2\left\langle AV,A\xi\right\rangle\label{BasEq4}\\
&-2\left\langle\bar{V},N\right\rangle\left\langle\nabla\xi,A\right\rangle+\left\langle\Delta_{f}^{[1]}\xi,V\right\rangle;\nonumber\\
\Delta_{f}\left(\left\langle\bar{V},N\right\rangle\left\langle W,\xi\right\rangle\right)=&|A|^2\<\bar{V},N\>\<W,\xi\>+2\scal{\bar V}{N}\scal{AW}{A\xi}+2\scal{\bar W}{N}\scal{AV}{A\xi}\label{BasEq5}\\
&-\<\bar{V},N\>\mathrm{Hess}f(W,\xi)-\<W,\xi\>\mathrm{Hess}f(V,N)\nonumber\\
&+\langle\bar{V},N\rangle\langle \bar{W},N\rangle\mathrm{Hess}f(N,\xi)+2\nabla\xi(AV,W)\nonumber\\
&-2\<\bar{V},N\>\<\bar{W},N\>\<\nabla\xi,A\>+\<\bar{V},N\>\<\Delta_{f}^{[1]}\xi,W \>.\nonumber
\end{align}
Finally, if $X_{\bar V,\bar W}=\scal{\bar V}{N}W-\scal{\bar W}{N}V$  and 
$u=\scal{X_{\bar V,\bar W}}{\xi}$ then:
\begin{equation}\label{deltafu}
\Delta_f u=\abs{A}^2u-\mathrm{Hess}f(X_{\bar{V},\bar{W}},\xi)+\<X_{\bar{V},\bar{W}},\Delta_{f}^{[1]}\xi\>+v,
\end{equation}
where $v=2(\<\nabla_{AV}\xi,W\>-\<\nabla_{AW}\xi, V\>)-\<W,\xi\>\mathrm{Hess}f(V,N)+\<V,\xi\>\mathrm{Hess}f(W,N)$.
\end{lemma}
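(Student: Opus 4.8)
\emph{Approach.} The plan is to prove the four identities of the Lemma in sequence — first \eqref{BasEq3}, then \eqref{BasEq4}, and finally \eqref{BasEq5} and \eqref{deltafu} by feeding the first two into a product rule and antisymmetrizing in $\bar V,\bar W$. The only geometric inputs needed are the three formulas of Lemma \ref{LemParVectFields}, the $f$-minimal equation, the Gauss and Codazzi equations of a hypersurface in flat space, and the weighted Weitzenb\"ock formula of Lemma \ref{LemfWeitz}.

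\emph{Step 1: equation \eqref{BasEq3}.} Put $h=\langle\bar V,N\rangle$. By \eqref{BasEq2} one has $\nabla h=-AV$, hence
\[
\Delta_f h=\mathrm{div}(AV)-\langle\nabla f,AV\rangle .
\]
Expanding $\mathrm{div}(AV)=\sum_i\langle(\nabla_{e_i}A)V,e_i\rangle+\sum_i\langle A\nabla_{e_i}V,e_i\rangle$, the first sum equals $\langle\nabla H,V\rangle$ by Codazzi (the ambient space being flat), while \eqref{BasEq1} turns the second into $|A|^2\langle\bar V,N\rangle$. Substituting $\nabla H=-\mathrm{Hess}f(N)^T+A\nabla f$ from \eqref{NablaH}, the term $\langle A\nabla f,V\rangle$ cancels $-\langle\nabla f,AV\rangle$ and \eqref{BasEq3} follows.

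\emph{Step 2: equation \eqref{BasEq4}.} I would begin from the weighted Bochner identity
\[
\Delta_f\langle V,\xi\rangle=\langle\nabla_f^*\nabla V,\xi\rangle+\langle V,\nabla_f^*\nabla\xi\rangle-2\sum_i\langle\nabla_{e_i}V,\nabla_{e_i}\xi\rangle ,
\]
which is immediate from the definitions. Now \eqref{BasEqV} supplies $\nabla_f^*\nabla V=A^2V+\langle\bar V,N\rangle\mathrm{Hess}f(N)^T$; by \eqref{BasEq1} the last sum is $\langle\bar V,N\rangle\langle\nabla\xi,A\rangle$; and the weighted Weitzenb\"ock formula of Lemma \ref{LemfWeitz}, applied on $\Sigma$, gives $\nabla_f^*\nabla\xi=\Delta_f^{[1]}\xi-\mathrm{Ric}_f^\Sigma(\xi)$. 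The one step here that is not pure bookkeeping — and the main obstacle — is identifying $\mathrm{Ric}_f^\Sigma$. Using the Gauss equation $\mathrm{Ric}^\Sigma(X,Y)=H\langle AX,Y\rangle-\langle A^2X,Y\rangle$, the relation $\mathrm{Hess}^\Sigma f(X,Y)=\mathrm{Hess}f(X,Y)+\langle\nabla f,N\rangle\langle AX,Y\rangle$ between the intrinsic and the ambient Hessian of $f$ (for $X,Y\in T\Sigma$), and the $f$-minimal equation $\langle\nabla f,N\rangle=-H$, the $H$-terms cancel and one is left with the clean formula $\mathrm{Ric}_f^\Sigma(\xi,V)=\mathrm{Hess}f(\xi,V)-\langle A^2\xi,V\rangle$. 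Plugging this in, \eqref{BasEq4} follows.

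\emph{Step 3: equations \eqref{BasEq5} and \eqref{deltafu}.} For \eqref{BasEq5} apply the product rule $\Delta_f(ab)=a\Delta_f b+b\Delta_f a-2\langle\nabla a,\nabla b\rangle$ with $a=\langle\bar V,N\rangle$ and $b=\langle W,\xi\rangle$: here $\Delta_f a$ is \eqref{BasEq3}, $\Delta_f b$ is \eqref{BasEq4} with $V$ replaced by $W$, and the cross term is computed from $\nabla a=-AV$ and the componentwise expression of $\nabla b$ coming from \eqref{BasEq1}, giving $-2\langle\nabla a,\nabla b\rangle=2\langle\bar W,N\rangle\langle AV,A\xi\rangle+2\langle\nabla_{AV}\xi,W\rangle$; collecting the terms yields \eqref{BasEq5}. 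Finally, since $u=\langle\bar V,N\rangle\langle W,\xi\rangle-\langle\bar W,N\rangle\langle V,\xi\rangle$, subtracting \eqref{BasEq5} from its image under $V\leftrightarrow W$ kills every $V\leftrightarrow W$-symmetric contribution (the $\langle A\cdot,A\xi\rangle$, the $\langle\bar V,N\rangle\langle\bar W,N\rangle\mathrm{Hess}f(N,\xi)$ and the $\langle\nabla\xi,A\rangle$ terms cancel in pairs), the surviving Hessian terms regroup as $-\mathrm{Hess}f(X_{\bar V,\bar W},\xi)$ together with the last two terms of $v$, and the two $\Delta_f^{[1]}\xi$ terms combine into $\langle X_{\bar V,\bar W},\Delta_f^{[1]}\xi\rangle$; what is left is exactly \eqref{deltafu}. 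Beyond Step 2 everything is careful bookkeeping, but one must be vigilant about the sign conventions for $A$, for the (positive) Laplacians, and for intrinsic versus ambient covariant differentiation.
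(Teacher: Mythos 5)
Your proposal is correct and follows essentially the same route as the paper: \eqref{BasEq3} via $\nabla\langle\bar V,N\rangle=-AV$, Codazzi and \eqref{NablaH}; \eqref{BasEq4} via the weighted Bochner/product identity together with \eqref{BasEqV}, Lemma \ref{LemfWeitz} and the identity $\mathrm{Ric}_f^{\Sigma}=\mathrm{Hess}f-A^2$; and \eqref{BasEq5}, \eqref{deltafu} by the product rule and antisymmetrization in $\bar V,\bar W$. The only (welcome) difference is that you derive $\mathrm{Ric}_f^{\Sigma}(\xi)=\mathrm{Hess}f(\xi)-A^2\xi$ from Gauss, the Hessian restriction formula and $f$-minimality, whereas the paper simply cites this fact from \cite{IR}.
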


\begin{proof}
First note that, as a consequence of the Codazzi equation, we have that
\[
\mathrm{div}(AV)=\<\bar{V}, N\>|A|^2+\<\nabla H, V\>.
\]
Therefore we obtain, by \eqref{NablaH} and \eqref{BasEq2}:
\begin{align*}
\Delta \< \bar{V},N\>&=-\mathrm{div}(\nabla \<\bar{V}, N\>)=\mathrm{div}(AV)\\
&=\<\bar{V}, N\>|A|^2-\<\mathrm{Hess}f(N), V\>+\<A\nabla f, V\>\\
&=\<\bar{V}, N\>|A|^2-\mathrm{Hess}f(N,V)-\<\nabla f, \nabla \< \bar{V},N\>\>.
\end{align*}
Equation \eqref{BasEq3} now follows by the definition of $\Delta_f$:
\[
\Delta_f \scal{\bar V}{N}=\Delta \scal{\bar V}{N}+\<\nabla f,\nabla \scal{\bar V}{N}\>.
\]

As for equation \eqref{BasEq4}, observe that by Lemma \ref{LemfWeitz} and \eqref{BasEqV}
\begin{eqnarray*}
\Delta_{f}\<V,\xi\>&=&\<\nabla_{f}^{*}\nabla V, \xi\>+\<V, \nabla^{*}_{f}\nabla\xi\>-2\<\nabla V,\nabla\xi\>\\
&=&\<A V,A\xi\>+\<\bar{V},N\>\mathrm{Hess}f(N,\xi)+\<\Delta_{f}^{[1]}\xi, V\>-\mathrm{Ric}_{f}^{\Sigma}(V, \xi)-2\<\nabla V,\nabla\xi\>.
\end{eqnarray*}

Moreover (see e.g. \cite{IR}) for an $f$-minimal hypersurface in the Euclidean space we have that
\[
\ \mathrm{Ric}_{f}^{\Sigma}(\xi)=\mathrm{Hess}f(\xi)-A^2(\xi),\quad \xi\in T\Sigma.
\]
We hence get
\begin{equation*}
\Delta_{f}\<V,\xi\>=-\mathrm{Hess}f(V,\xi)+\<\bar{V},N\>\mathrm{Hess}f(N,\xi)+2\<AV,A\xi\>+\<\Delta_{f}^{[1]}\xi,V\>-2\<\nabla V,\nabla\xi\>.
\end{equation*}
Note also that, by \eqref{BasEq1}, 
\[
\ \<\nabla V,\nabla\xi\>=\<\bar{V},N\>\<A,\nabla\xi\>,
\]
Thus
\[
\ \Delta_{f}\<V,\xi\>=-\mathrm{Hess}f(V,\xi)+\<\bar{V},N\>\mathrm{Hess}f(N,\xi)+2\<AV,A\xi\>+\<\Delta_{f}^{[1]}\xi,V\>-2\<\bar{V},N\>\<A\,,\nabla\xi\,\>.
\]
We now note that, for any $X\in T\Sigma$:
$$
\begin{aligned}
\scal{\nabla\scal{W}{\xi}}{X}=&\scal{\nabla_XW}{\xi}+\scal{W}{\nabla_X\xi}\\
=&\scal{\bar W}{N}\scal{A\xi}{X}+\nabla\xi(X,W)
\end{aligned}
$$
so that
\begin{equation}\label{dn}
\begin{aligned}
\scal{\nabla\scal{\bar V}{N}}{\nabla\scal{W}{\xi}}&=-\scal{AV}{\nabla\scal{W}{\xi}}\\
&=-\scal{\bar W}{N}\scal{AV}{A\xi}-\nabla\xi(AV,W)
\end{aligned}
\end{equation}
As
\begin{equation}
\Delta_f\left(\scal{\bar V}{N}\scal{W}{\xi}\right)=\scal{W}{\xi}\Delta_f\scal{\bar V}{N}+\scal{\bar V}{N}\Delta_f\scal{W}{\xi}-2\scal{\nabla\scal{\bar V}{N}}{\nabla\scal{W}{\xi}}
\end{equation}
equation \eqref{BasEq5} now follows by substituting in the above expression \eqref{BasEq3}, \eqref{BasEq4} and \eqref{dn}. 

Finally, \eqref{deltafu} follows by using formula \eqref{BasEq5} twice. 
\end{proof}

We also observe the following fact, which will be used later. 

\begin{lemma}\label{linearfunctions} Let $x:\Sigma^m\to\real{m+1}$ be a self-shrinker and $\bar V$ be a parallel vector field on $\real{m+1}$. Then every linear function $u=\scal{\bar V}{x}$, restricted to $\Sigma$, satisfies: 
$$
\nabla\scal{\bar V}{x}=V\quad \Delta_f\scal{\bar V}{x}= \scal{\bar V}{x}.
$$
Hence any such $u$ is an eigenfunction of the weighted Laplacian associated to the eigenvalue $\lambda=1$.
Moreover, if $\Sigma$ is complete and properly immersed then $u\in W^{1,2}(\Sigma_f)$, that is:
$$
\int_{\Sigma}(u^2+\abs{\nabla u}^2)e^{-f}d\mu < +\infty.
$$
\end{lemma}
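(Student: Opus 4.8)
The plan is to verify the two pointwise identities by a short direct computation from the structure equations of Lemma \ref{LemParVectFields}, and then to deduce the $W^{1,2}$-bound from the polynomial volume growth of properly immersed self-shrinkers. Throughout, recall that for a self-shrinker the weight is $f=\frac{\abs{x}^2}{2}$, so that $\nabla f=x^{T}$ on $\Sigma$ (the tangential part of the position vector), while $\derive{f}{N}=\scal{x}{N}$, and the $f$-minimal equation \eqref{fminimal} reads $H=-\scal{x}{N}$.

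For the gradient I would argue as follows: since the position vector field on $\real{m+1}$ satisfies $\overline{\nabla}_X x=X$ for every $X$, one has $X\scal{\bar V}{x}=\scal{\bar V}{X}$, and since $X$ is tangent to $\Sigma$ this equals $\scal{V}{X}$; hence $\nabla u=V$. For the weighted Laplacian, write $\Delta u=-\mathrm{div}(\nabla u)=-\mathrm{div}(V)$ and use \eqref{BasEq1} to get $\mathrm{div}(V)=\scal{\bar V}{N}\,\tr A=\scal{\bar V}{N}H=-\scal{\bar V}{N}\scal{x}{N}$, so $\Delta u=\scal{\bar V}{N}\scal{x}{N}$. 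On the other hand $\scal{\nabla f}{\nabla u}=\scal{x}{V}=\scal{x}{\bar V}-\scal{\bar V}{N}\scal{x}{N}=u-\scal{\bar V}{N}\scal{x}{N}$ (using $\scal{V}{N}=0$), and adding the two contributions yields $\Delta_f u=u$. Hence $u$ is an eigenfunction of $\Delta_f$ for the eigenvalue $1$; the one degenerate case, $\bar V$ normal to $\Sigma$ everywhere, forces $u\equiv 0$ and is harmless. One could alternatively read this off from \eqref{BasEq3} and \eqref{BasEq4}, but the direct computation is shorter.

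For the integrability claim, I would note that $\abs{\nabla u}^2=\abs{V}^2\le\abs{\bar V}^2$ is bounded, so $\int_\Sigma\abs{\nabla u}^2e^{-f}\,d\mu\le\abs{\bar V}^2\,\vol_f(\Sigma)$, while $u^2=\scal{\bar V}{x}^2\le\abs{\bar V}^2\abs{x}^2$, so $\int_\Sigma u^2e^{-f}\,d\mu\le\abs{\bar V}^2\int_\Sigma\abs{x}^2e^{-\abs{x}^2/2}\,d\mu$. It therefore suffices to show $\int_\Sigma(1+\abs{x}^2)e^{-\abs{x}^2/2}\,d\mu<+\infty$, and this is the step where properness enters: a complete, properly immersed self-shrinker has Euclidean volume growth, $\vol(\Sigma\cap B_R)\le CR^m$ for $R\ge 1$, with $B_R\subset\real{m+1}$ the ball of radius $R$ about the origin (see \cite{DX}). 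Splitting the integral over the dyadic annuli $\{2^k\le\abs{x}<2^{k+1}\}$ and using that $(1+r^2)e^{-r^2/2}$ decays faster than any power of $r$ then makes the resulting series converge. The only step needing an input beyond the structure equations is this last one, where one invokes the polynomial volume growth of properly immersed self-shrinkers; the two identities are two-line computations, so this is where the (modest) real content of the lemma lies.
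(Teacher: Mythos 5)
Your proof is correct and takes essentially the same route as the paper's: the two pointwise identities are exactly the ``easy'' computation the authors omit, and the integrability is obtained, as in the paper, from the bounds $\abs{\nabla u}\leq\abs{\bar V}$ and $\abs{u}\leq\abs{\bar V}\,\abs{x}$ together with the $f$-integrability of polynomials in $\abs{x}$ on complete properly immersed self-shrinkers. The only difference is that the paper cites this last fact from the literature, whereas you re-derive it from the Euclidean volume growth of \cite{DX} via a dyadic decomposition, which is a correct (and self-contained) substitute.
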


\begin{proof} The proof of the first statement is easy. If $\Sigma$ is complete and properly immersed it is known that then it is of finite volume and, more generally, every polynomial in $\abs{x}$ is $f$-integrable (for details we refer to \cite{Imp}). 
The second assertion now follows because 
 $\abs{u}\leq \abs{x}$ and $\abs{\nabla u}$ is bounded by a constant.
\end{proof}


\subsection{End of proof of Theorem \ref{MainThmCompactfMin}}

Select an orthonormal basis $\left\{\varphi_{j}\right\}$ of $L^{2}(\Sigma_{f})=L^{2}(\Sigma, e^{-f}d\mu)$ given by eigenfunctions of $L_{f}$, where $\varphi_{j}$ is associated to $\lambda_{j}(L_{f})$, and let $E^d$ be the direct sum of the first $d$ eigenspaces of $\Delta_{f}^{[1]}$:
$$
E^d=\bigoplus_{j=1}^d V_{\Delta_f^{[1]}}(\lambda_j).
$$
We look for vector fields $\xi\in E^d$ such that $u=\<X_{\bar{V},\bar{W}},\xi\>$ satisfies the following orthogonality relations  for all choices of $\bar{V},\bar{W}\in\bar{\mathcal{P}}$:
\[
\ \int_{\Sigma}\<X_{\bar{V},\bar{W}},\xi\>\varphi_{1}e^{-f}d\mu=\ldots=\int_{\Sigma}\<X_{\bar{V},\bar{W}},\xi\>\varphi_{k-1}e^{-f}d\mu
\]
As the vector space $\bar{\mathcal{P}}$ has dimension $m+1$ and since $X_{\bar{V},\bar{W}}$ is a skew symmetric bilinear function of $\bar{V},\bar{W}$, we see that the above is a system of ${{m+1}\choose{2}}(k-1)$ homogeneous linear equations in the unknown $\xi\in E^d$.

If $d=d(k)={{m+1}\choose{2}}(k-1)+1$, we can then find a non-trivial vector field $\xi\in E^d$ such that $u=\<X_{\bar{V},\bar{W}},\xi\>$ is $L^{2}(\Sigma_f)$-orthogonal to the first $k-1$ eigenfunctions of $L_{f}$ for all $\bar{V},\,\bar{W}$. Then, by the min-max principle, we have that
\begin{equation}\label{EqLambdak}
\lambda_{k}(L_{f})\int_{\Sigma}u^2e^{-f}d\mu\leq\int_\Sigma uL_{f}u\,e^{-f}d\mu.
\end{equation}
Let $\bar{\mathcal{U}}$ be the family of parallel vector fields of $\real{m+1}$ having unit length. As in \cite{Sav}, we identify $\bar{\mathcal U}$ with $\sphere m$ and endow it with the measure $\hat\mu=\frac{m+1}{\abs{\sphere m}}{\rm dvol}_{\sphere m}$.
Using coordinates, one verifies easily  that, for all $\bar{X},\bar{Y}\in\real{m+1}$:
\begin{equation}\label{EqBarU}
\int_{\bar{\mathcal{U}}}\<\bar{V},\bar{X}\>\<\bar{V},\bar{Y}\>d\bar{V}=\<\bar{X},\bar{Y}\>.
\end{equation}
Using the product metric on $\bar{\mathcal{U}}\times\bar{\mathcal{U}}$ and applying \eqref{EqBarU} repeatedly, we see that at each $x\in\Sigma$:
\begin{eqnarray*}
&&\int_{\bar{\mathcal{U}}\times\bar{\mathcal{U}}}u^{2}d\bar{V}d\bar{W}=2|\xi|^2;\\
&&\int_{\bar{\mathcal{U}}\times\bar{\mathcal{U}}}\mathrm{Hess}f(X_{\bar{V},\bar{W}},\xi)u d\bar{V}d\bar{W}=2\mathrm{Hess}f(\xi,\xi);\\
&&\int_{\bar{\mathcal{U}}\times\bar{\mathcal{U}}}\mathrm{Hess}f(N,N)u^2d\bar{V}d\bar{W}=2\mathrm{Hess}f(N,N)|\xi|^2;\\
&&\int_{\bar{\mathcal{U}}\times\bar{\mathcal{U}}}u\<X_{\bar{V},\bar{W}},\Delta_{f}^{H}\xi\>d\bar{V}d\bar{W}=2\<\xi,\Delta_{f}^{[1]}\xi\>;\\
&&\int_{\bar{\mathcal{U}}\times\bar{\mathcal{U}}}u v d\bar{V}d\bar{W}=0.
\end{eqnarray*}
Integrating \eqref{EqLambdak} with respect to $\left(\bar{V},\bar{W}\right)\in\bar{\mathcal{U}}\times\bar{\mathcal{U}}$, applying the Fubini theorem and Lemma \ref{Lem3}, one concludes that
\[
\ \lambda_{k}(L_{f})\int_{\Sigma}|\xi|^2e^{-f}d\mu\leq-\int_{\Sigma}(\mathrm{Hess}f(\xi,\xi)+\mathrm{Hess}f(N,N)|\xi|^2)e^{-f}d\mu+\int_{\Sigma}\<\xi,\Delta_{f}^{[1]}\xi\>e^{-f}d\mu.
\]
Now note that, as $\xi$ is a linear combination of the first $d(k)$ eigenvector fields of $\Delta_{f}^{[1]}$, one easily verifies that
\[
\ \int_{\Sigma}\<\xi,\Delta_{f}^{[1]}\xi\>e^{-f}d\mu\leq\lambda_{d(k)}(\Delta_{f}^{[1]})\int_{\Sigma}|\xi|^2e^{-f}d\mu.\]
Putting together the above facts with the assumption  $\mathrm{Hess}f\geq\mu$,  the assertion of the Theorem follows.

\section{Non-compact case : proofs} 

We now assume that the immersion $x:\Sigma^m\to\real{m+1}$ is  complete and non-compact, and let $f\in C^{\infty}(\real{m+1})$ be a given weight.  
We let $\mathcal H^1_f(\Sigma)$ denote the space of $f$-square summable $f$-harmonic vector fields on $\Sigma$:
$$
\mathcal H^1_f(\Sigma)=\{\xi\in T\Sigma: d\xi=\delta_f\xi=0, \int_{\Sigma}\abs{\xi}^2e^{-f}\,d\mu<+\infty\}.
$$
We want to estimate the $f$-index from below, in terms of the dimension of 
$\mathcal H^1_f(\Sigma)$. Actually, we give a slightly stronger estimate. 
We denote by $V_{\Delta_f}(\lambda)$ the space of $f$-square summable eigenfunctions of $\Delta_f$ associated to $\lambda$ and having finite weighted Dirichlet integral:
$$
V_{\Delta_f}(\lambda)=\{u\in C^{\infty}(\Sigma): \Delta_fu=\lambda u, \int_{\Sigma}(u^2+\abs{\nabla u}^2)e^{-f}d\mu <+\infty\}.
$$

For a fixed $\Lambda>0$, we  let $E(\Sigma,\Lambda)$ be the vector space generated by all vector fields which are gradients of some $u\in V_{\Delta_f}(\lambda)$ with $\lambda<\Lambda$:
$$
E(\Sigma,\Lambda)={\rm span}\{\nabla u: u\in V_{\Delta_f}(\lambda), \lambda\leq \Lambda\}.
$$
In the next section we will prove the following fact.

\begin{theorem} \label{complete} Assume that $\Sigma^m$ is a complete, non-compact $f$-minimal immersed hypersurface of the weighted space $M_{f}=(\real{m+1},g_{can},e^{-f}\,d\mu)$, such that $\Ric_f\geq \mu>0$. Assume that ${\rm Ind}_f(\Sigma)$ is finite. Then $\mathcal H^1_f(\Sigma)$ and $E(\Sigma,\Lambda)$ have finite dimensions for any $\Lambda<2\mu$, and
$$
{\rm Ind}_f(\Sigma)\geq \dfrac{2}{m(m+1)}\Big(\dim E(\Sigma,\Lambda) + \dim  \mathcal H^1_f(\Sigma)\Big).
$$
\end{theorem}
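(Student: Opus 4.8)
The plan is to adapt the argument of Theorem \ref{MainThmCompactfMin} to the non-compact setting, where the role of the finite-dimensional eigenspaces $E^d$ of $\Delta_f^{[1]}$ is played by the two explicitly described spaces $\mathcal H^1_f(\Sigma)$ and $E(\Sigma,\Lambda)$. First I would set $W = \mathcal H^1_f(\Sigma) \oplus E(\Sigma,\Lambda)$; these are subspaces of $L^2(\Sigma_f)$-vector fields, $f$-harmonic ones being genuinely harmonic and the gradients $\nabla u$ with $u \in V_{\Delta_f}(\lambda)$, $\lambda \le \Lambda$, being eigenvector fields of $\Delta_f^{[1]}$ for the eigenvalue $\lambda$ (since $\Delta_f^{[1]}$ commutes with $d$). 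Note that the two pieces are $L^2_f$-orthogonal since $\langle \nabla u, \omega\rangle_{L^2_f} = \int u\,\delta_f\omega\, e^{-f}d\mu = 0$ for $f$-harmonic $\omega$ (integration by parts is legitimate here because $u$ has finite weighted Dirichlet integral and $\omega$ is $L^2_f$ — a standard density/cutoff argument). For every $\xi \in W$ one has the pointwise bound $\langle \xi, \Delta_f^{[1]}\xi\rangle \le \Lambda |\xi|^2$ after integration, exactly as in the compact case; moreover every such $\xi$ lies in the form domain of $\Delta_f^{[1]}$, i.e. has finite weighted $\|\nabla\xi\|^2$.

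Next I would run the linear-algebra/min-max step. Suppose, for contradiction, that ${\rm Ind}_f(\Sigma) = N < \frac{2}{m(m+1)}\dim W$; in particular $\dim W$ is finite by the finite-index hypothesis (this finiteness will itself need justification — see below). Let $\varphi_1,\dots,\varphi_N$ be an $L^2_f$-orthonormal basis of the span of eigenfunctions of $L_f$ with negative eigenvalue (on a complete manifold with finite index, $L_f$ has finitely many negative eigenvalues below the bottom of the essential spectrum, with $L^2_f$ eigenfunctions). Impose on $\xi \in W$ the $\binom{m+1}{2}N$ linear conditions $\int_\Sigma \langle X_{\bar V,\bar W},\xi\rangle \varphi_i\, e^{-f}d\mu = 0$ for $1\le i\le N$ and all $\bar V,\bar W \in \bar{\mathcal P}$ — since $X_{\bar V,\bar W}$ depends skew-bilinearly on a pair in the $(m+1)$-dimensional space $\bar{\mathcal P}$, this is $\binom{m+1}{2}N < \dim W$ equations, so there is a nonzero solution $\xi$. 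For this $\xi$ and all $\bar V,\bar W$ the test function $u = \langle X_{\bar V,\bar W},\xi\rangle$ is $L^2_f$-orthogonal to all negative eigenfunctions of $L_f$, hence $\int_\Sigma u L_f u\, e^{-f}d\mu \ge 0$; one must check $u$ is an admissible test function (it is compactly supported if $\Sigma$ is compact, but in general one approximates by $\eta_R u$ with Lipschitz cutoffs $\eta_R$ and passes to the limit using $u, \nabla u \in L^2_f$, which follows from $\xi, \nabla\xi \in L^2_f$ and boundedness of $X_{\bar V,\bar W}$ when $|A|$ is controlled — or more robustly from the quadratic-form domain characterization of the index). Then I would integrate the inequality $\int u L_f u\, e^{-f}d\mu \ge 0$ over $(\bar V,\bar W) \in \bar{\mathcal U}\times\bar{\mathcal U}$ exactly as in Section 3: applying Lemma \ref{Lem3}, Fubini, and the averaging identities (the ``$\int u v = 0$'' one and the others) yields $0 \le \int_\Sigma \big(-\Hess f(\xi,\xi) - \Hess f(N,N)|\xi|^2 + \langle\xi,\Delta_f^{[1]}\xi\rangle\big) e^{-f}d\mu \le \int_\Sigma (-2\mu + \Lambda)|\xi|^2 e^{-f}d\mu < 0$ since $\Lambda < 2\mu$ and $\xi \ne 0$, a contradiction. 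Hence ${\rm Ind}_f(\Sigma) \ge \frac{2}{m(m+1)}\dim W = \frac{2}{m(m+1)}(\dim E(\Sigma,\Lambda) + \dim\mathcal H^1_f(\Sigma))$.

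The main obstacle, and the part that genuinely distinguishes the non-compact proof from the compact one, is the finite-dimensionality claim: showing that $\mathcal H^1_f(\Sigma)$ and $E(\Sigma,\Lambda)$ are finite-dimensional once ${\rm Ind}_f(\Sigma) < \infty$. I would handle this by the standard ``if it were infinite-dimensional, build infinitely many negative directions'' dichotomy. Concretely, if $\dim W = \infty$, then for any $k$ one can find $k$ linearly independent $\xi$'s in $W$ satisfying the orthogonality relations above relative to any chosen finite set of functions; running the averaging argument on each produces, via the variational characterization of the index (max dimension of a subspace of $C_0^\infty$, or its form-domain closure, on which $Q_f < 0$), a $k$-dimensional space on which $Q_f$ is negative — one must be careful that the test functions $u = \langle X_{\bar V,\bar W},\xi\rangle$ for a $k$-dimensional family of $\xi$ span a space of dimension growing with $k$ (here skew-bilinearity and nondegeneracy of the pairing $\xi \mapsto (\bar V,\bar W) \mapsto \langle X_{\bar V,\bar W},\xi\rangle$ are used, as in \cite{Sav}), contradicting finiteness of the index for $k > N$. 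Thus $\dim W \le \tfrac{m(m+1)}{2}\,{\rm Ind}_f(\Sigma) < \infty$, which simultaneously establishes the finiteness assertion and, once the estimate is run at the optimal $k$, the stated inequality. The remaining routine points — legitimacy of integration by parts for $L^2_f$ forms with finite Dirichlet energy on a complete weighted manifold, and the density of the cutoff approximations $\eta_R u$ — are carried out with the usual logarithmic (or linear) cutoff functions adapted to the weighted measure, exploiting completeness of $\Sigma$.
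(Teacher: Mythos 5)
Your overall strategy (the test functions $u=\scal{X_{\bar V,\bar W}}{\xi}$, the $\binom{m+1}{2}N$ orthogonality conditions, the averaging over $\overline{\mathcal U}\times\overline{\mathcal U}$, and the bound $\int\scal{\xi}{\Delta^{[1]}_f\xi}e^{-f}d\mu\le\Lambda\int\abs{\xi}^2e^{-f}d\mu$ on $\mathcal H^1_f(\Sigma)\oplus E(\Sigma,\Lambda)$) matches the paper, but the mechanism you propose for handling non-compactness has a genuine gap. You want to test the \emph{global} operator $L_f$ on $L^2(\Sigma_f)$ against $u=\scal{X_{\bar V,\bar W}}{\xi}$ and invoke ``orthogonal to the negative eigenfunctions $\Rightarrow Q_f(u,u)\ge 0$.'' For that, $u$ must lie in the form domain of $L_f$, which requires at least $\nabla u\in L^2(\Sigma_f)$ and $(\abs{A}^2+\mathrm{Hess}f(N,N))u^2\in L^1(\Sigma_f)$. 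By \eqref{BasEq1}--\eqref{BasEq2}, $\nabla u$ contains terms of size $\abs{A}\abs{\xi}$, and the potential contributes $\abs{A}^2\abs{\xi}^2$; neither is controlled by $\xi\in L^2(\Sigma_f)$, since $\abs{A}$ is not assumed bounded (and is not bounded for general self-shrinkers). You flag this yourself (``when $\abs{A}$ is controlled --- or more robustly from the quadratic-form domain characterization''), but no such control is available under the hypotheses, and the fallback is not an argument. The same unverified integrability is needed to justify Fubini in your global averaging step, e.g.\ for the term $uv$ with $v$ containing $\nabla_{AV}\xi$. Finally, finite weighted Dirichlet energy of elements of $\mathcal H^1_f(\Sigma)$ (needed even to place $\xi$ itself in the form domain of $\Delta^{[1]}_f$) is asserted but not proved.

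The paper's proof is engineered precisely to avoid these domain issues, and its two key devices are absent from your proposal. First, it never uses global spectral theory of $L_f$: it fixes the exhaustion $\Omega_n=B_{2n}$, on which ${\rm Ind}_f(\Omega_n)=I$ for $n\ge n_0$, and tests only with the compactly supported functions $\phi_n u$, shifting derivatives off the cutoff via Lemma \ref{lemma_IBP} at the cost of an error $\int u^2\abs{\nabla\phi_n}^2e^{-f}d\mu\le c^2/n^2$; all integrations then take place on relatively compact sets, so integrability and Fubini are automatic. Second, because the orthogonality conditions now depend on $n$, one gets a \emph{sequence} $\xi_n$ (normalized in $L^2(\Sigma_f)$) rather than a single $\xi$, and the contradiction is reached by a compactness argument on the unit sphere of the chosen finite-dimensional subspace $E^k\subseteq V(\Sigma)$: the estimate forces $\int_{B_n}\abs{\xi_n}^2e^{-f}d\mu\to 0$, while $L^2_f$-convergence of a subsequence forces this quantity to tend to $1$. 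This last step is the genuinely new ingredient of the non-compact case and is what your write-up would need to supply (or replace by a rigorous verification of the form-domain membership above) before the argument closes.
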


Specializing to the case $f=\frac 12\abs{x}^2$, we have the following consequence.

\begin{corollary}\label{dimensiontwo} Let $\Sigma^m$ be a complete, properly immersed self-shrinker which is not a hyperplane. Then:
$$
{\rm Ind}_f(\Sigma)\geq \dfrac{2}{m(m+1)}\Big(m+1 + \dim  \mathcal H^1_f(\Sigma)\Big).
$$
\end{corollary}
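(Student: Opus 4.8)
The plan is to obtain the corollary as a direct specialization of Theorem~\ref{complete} to the weight $f=\frac12\abs{x}^2$. For this choice one has $\Ric^M_f=\mathrm{Hess}f=g_{\rm can}$, so the ambient Bakry-\'Emery Ricci tensor satisfies $\Ric_f\geq\mu>0$ with $\mu=1$, and the relevant spectral threshold is $2\mu=2$. If $\mathrm{Ind}_f(\Sigma)=+\infty$ the claimed bound is vacuous, so I may assume it is finite; then Theorem~\ref{complete} guarantees that $\mathcal H^1_f(\Sigma)$ and $E(\Sigma,\Lambda)$ are finite-dimensional for every $\Lambda<2$ and that
$$
\mathrm{Ind}_f(\Sigma)\geq\frac{2}{m(m+1)}\Big(\dim E(\Sigma,\Lambda)+\dim\mathcal H^1_f(\Sigma)\Big).
$$
Thus everything reduces to exhibiting, for a suitable $\Lambda\in[1,2)$, a subspace of $E(\Sigma,\Lambda)$ of dimension at least $m+1$.

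The natural candidate is the space of gradients of linear coordinate functions. By Lemma~\ref{linearfunctions}, for every parallel field $\bar V\in\overline{\mathcal P}$ the restriction to $\Sigma$ of $u_{\bar V}\doteq\scal{\bar V}{x}$ satisfies $\Delta_f u_{\bar V}=u_{\bar V}$ and $\nabla u_{\bar V}=V$, and, since $\Sigma$ is properly immersed, $u_{\bar V}\in W^{1,2}(\Sigma_f)$; hence $u_{\bar V}\in V_{\Delta_f}(1)$. Choosing $\Lambda=1$, we get $V=\nabla u_{\bar V}\in E(\Sigma,1)$ for all $\bar V$, so the image of the linear map $\overline{\mathcal P}\to T\Sigma$, $\bar V\mapsto V$, is contained in $E(\Sigma,1)$. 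I would then check this map is injective, which gives $\dim E(\Sigma,1)\geq\dim\overline{\mathcal P}=m+1$.

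Injectivity is where the hypothesis on $\Sigma$ enters. If $V\equiv0$ for some $\bar V\neq0$, then $\bar V=\scal{\bar V}{N}N$ pointwise, so $N=\pm\bar V/\abs{\bar V}$ is a constant unit vector; the connected hypersurface $\Sigma$ then lies in an affine hyperplane orthogonal to $\bar V$, which is totally geodesic, so $\mathbf H\equiv0$. The self-shrinker equation gives $x^\bot=-\mathbf H=0$; but $x^\bot$ is the $\bar V$-component of $x$, so this hyperplane contains the origin. Being complete and an open subset of this hyperplane, $\Sigma$ coincides with it, contradicting the assumption that $\Sigma$ is not a hyperplane. Hence the map is injective and $\dim E(\Sigma,1)\geq m+1$; substituting into the inequality above yields the corollary.

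The real content here lives in Theorem~\ref{complete}; within this corollary the only point needing care is the injectivity of $\bar V\mapsto V$, i.e.\ checking that the $m+1$ linear eigenfunctions contribute their full multiplicity to $E(\Sigma,\Lambda)$, together with the use of properness (via Lemma~\ref{linearfunctions}) to place them in $W^{1,2}(\Sigma_f)$. I expect no serious obstacle beyond making this dichotomy precise: either $V\not\equiv0$ for all $\bar V\neq0$, or $\Sigma$ is a hyperplane through the origin.
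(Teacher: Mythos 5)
Your proposal is correct and follows essentially the same route as the paper: specialize Theorem~\ref{complete} with $\Lambda=1<2\mu=2$, use Lemma~\ref{linearfunctions} to place the projections $V$ of parallel fields inside $E(\Sigma,1)$, and observe that these projections span an $(m+1)$-dimensional space unless some parallel field is everywhere normal to $\Sigma$, which would force $\Sigma$ to be a hyperplane. Your write-up merely makes explicit the step the paper leaves implicit (constant $N$ $\Rightarrow$ affine hyperplane, then $x^\bot=-\mathbf H=0$ $\Rightarrow$ it passes through the origin), so there is nothing to correct.
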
 

\begin{proof} From Lemma \ref{linearfunctions} we see that each $V\in\mathcal P$ is the gradient of the linear function $u=\scal{\bar V}{x}$, and this function belongs to $V_{\Delta_f}(1)$. Then $\mathcal P\subseteq E(\Sigma,1)$. Taking $\Lambda=1$ in the theorem (note that $1<2\mu$ because for a self-shrinker $2\mu=2$), it is enough to show that
$$
{\rm dim} \mathcal P\geq m+1.
$$
Let $\mathcal B=(\bar{V}_1,\dots,\bar{V}_{m+1})$ be an orthonormal basis of $\real{m+1}$, and denote by $V_1,\dots,V_{m+1}$ the vector fields obtained by projection of $\mathcal B$ on $\Sigma$. If these vector fields were linearly dependent, we would have a parallel vector field on $\real{m+1}$ which is everywhere normal to $\Sigma$: this can't happen unless $\Sigma$ is a hyperplane. 
\end{proof}

In Subsection \ref{Subsec4.2} we will prove that if $\Sigma^2$ is a complete, connected, orientable surface of genus $g$ then ${\rm dim} \mathcal H^1_f(\Sigma)\geq 2g$. Hence, if $\Sigma^2$ is $f$-minimal in $\real 3$ and ${\rm Ric}_f\geq \mu>0$  the theorem gives immediately
$$
{\rm Ind}_f(\Sigma)\geq \dfrac{2g}{3}.
$$
which proves the first assertion of Corollary \ref{CorE} in the introduction. If $\Sigma^2$ is a properly immersed shrinker, then 
$$
{\rm Ind}_f(\Sigma)\geq \dfrac{2}{3}g+1.
$$
In fact, this is trivially true if $\Sigma^2$ is a hyperplane (in that case, in fact, the index is equal to $1$); otherwise, we apply Corollary \ref{dimensiontwo}.

\subsection{Proof of Theorem \ref{complete}}

Before giving the proof we state two lemmas.

\begin{lemma}\label{lemma_IBP} Let $\Omega\subset \Sigma$ be a bounded domain and let $\phi\in C^{\infty}_0(\Omega)$. Let $u\in C^{\infty}(\Omega)$. If
$
L_f=\Delta_f +T,
$
is a Schr\"odinger operator and $T\in C^{\infty}(\Omega)$ is any potential, then 
$$
\int_{\Omega}\Big(\abs{\nabla(\phi u)}^2+T\phi^2u^2\Big)e^{-f}\,d\mu =
\int_{\Omega}\phi^2uL_fu\cdot e^{-f}\,d\mu+\int_{\Omega}u^2\abs{\nabla\phi}^2\cdot e^{-f}\,d\mu.
$$
\end{lemma}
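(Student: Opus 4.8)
The identity is a weighted integration-by-parts formula, so the whole proof is bookkeeping around the weighted Green's formula; there is no genuine obstacle, only the need to track the weight $e^{-f}$ carefully. The first step is to rewrite the drift Laplacian in divergence form: since $\Delta_f u=\Delta u+\scal{\nabla f}{\nabla u}=-\mathrm{div}(\nabla u)+\scal{\nabla f}{\nabla u}$, a direct computation gives
$$
\Delta_f u=-e^{f}\,\mathrm{div}\big(e^{-f}\nabla u\big).
$$
Hence
$$
\int_{\Omega}\phi^2 u\,\Delta_f u\cdot e^{-f}\,d\mu=-\int_{\Omega}\phi^2 u\,\mathrm{div}\big(e^{-f}\nabla u\big)\,d\mu .
$$

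Next I would apply the ordinary divergence theorem on $\Omega$ to the vector field $\phi^2 u\,e^{-f}\nabla u$, which has compact support in $\Omega$ because $\phi\in C^\infty_0(\Omega)$; therefore no boundary term appears, and
$$
\int_{\Omega}\phi^2 u\,\Delta_f u\cdot e^{-f}\,d\mu=\int_{\Omega}\scal{\nabla(\phi^2 u)}{\nabla u}\,e^{-f}\,d\mu .
$$
Expanding $\nabla(\phi^2 u)=\phi^2\nabla u+2\phi u\,\nabla\phi$ gives
$$
\scal{\nabla(\phi^2 u)}{\nabla u}=\phi^2\abs{\nabla u}^2+2\phi u\scal{\nabla\phi}{\nabla u}.
$$

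The final step is to recognize this as a completed square. Since
$$
\abs{\nabla(\phi u)}^2=\phi^2\abs{\nabla u}^2+2\phi u\scal{\nabla u}{\nabla\phi}+u^2\abs{\nabla\phi}^2,
$$
we get $\phi^2\abs{\nabla u}^2+2\phi u\scal{\nabla\phi}{\nabla u}=\abs{\nabla(\phi u)}^2-u^2\abs{\nabla\phi}^2$, hence
$$
\int_{\Omega}\phi^2 u\,\Delta_f u\cdot e^{-f}\,d\mu=\int_{\Omega}\Big(\abs{\nabla(\phi u)}^2-u^2\abs{\nabla\phi}^2\Big)e^{-f}\,d\mu .
$$
Adding $\int_{\Omega}T\phi^2 u^2\,e^{-f}\,d\mu$ to both sides and using $L_f u=\Delta_f u+Tu$ yields
$$
\int_{\Omega}\phi^2 u\,L_f u\cdot e^{-f}\,d\mu=\int_{\Omega}\Big(\abs{\nabla(\phi u)}^2+T\phi^2 u^2\Big)e^{-f}\,d\mu-\int_{\Omega}u^2\abs{\nabla\phi}^2\,e^{-f}\,d\mu ,
$$
which is the claimed identity after rearranging. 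The only point deserving care is the vanishing of the boundary term, which is guaranteed by $\phi\in C^\infty_0(\Omega)$; everything else is elementary product-rule manipulation.
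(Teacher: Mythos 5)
Your proof is correct and is essentially the same integration-by-parts argument the paper sketches: you write $\Delta_f u=-e^{f}\mathrm{div}(e^{-f}\nabla u)$ and complete the square, whereas the paper invokes the product identity $\Delta_f(uv)=v\Delta_f u+u\Delta_f v-2\scal{\nabla u}{\nabla v}$ together with weighted integration by parts — the two computations are interchangeable. All signs are consistent with the paper's convention $\Delta=-\mathrm{div}(\nabla\,)$, and the vanishing of the boundary term is correctly justified by $\phi\in C^{\infty}_0(\Omega)$.
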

\begin{proof}
The proof is obtained using integration by parts and the identity
$$
\Delta_f(uv)=v\Delta_fu+u\Delta_f v-2\scal{\nabla u}{\nabla v}.
$$
\end{proof}

We use a sequence of cut-off functions defined as follows. For each positive integer $n$,  let $B_n$ be the (intrinsic) geodesic ball in $\Sigma$ having radius $n$ and centered at a fixed point $x_0\in\Sigma$. Since $\Sigma$ is complete, it is standard to obtain by \cite[Proposition 2.1]{GW} that there exists a family of smooth functions $\phi_n$ on $\Sigma$ such that $\phi_n=1$ on $B_n$, $\phi_n$ is compactly supported on  $B_{2n}$ and 
$$
\abs{\nabla\phi_n}\leq\dfrac{c}{n}
$$
for a constant $c$ depending only on $\Sigma$. 

\begin{lemma}\label{vusigma} Let $V(\Sigma)=E(\Sigma,\Lambda)+\mathcal{H}^1_f(\Sigma)$. Then
$$
\dim V(\Sigma)=\dim E(\Sigma,\Lambda)+\dim \mathcal{H}^1_f(\Sigma).
$$
Moreover, for any $\eta\in V(\Sigma)$ one has:
$$
\int_{\Sigma}\scal{\Delta^{[1]}_f\eta}{\eta} e^{-f}d\mu\leq\Lambda \int_{\Sigma}\abs{\eta}^2e^{-f}d\mu.
$$
\end{lemma}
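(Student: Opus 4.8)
The plan is to reduce both assertions to a short list of integration-by-parts identities on the complete non-compact $\Sigma$, all justified by the cut-off functions $\phi_n$ of \cite[Proposition 2.1]{GW} recalled above together with the finite weighted $L^2$ and Dirichlet conditions built into the definitions of $V_{\Delta_f}(\lambda)$ and $\mathcal H^1_f(\Sigma)$. First I would record that $\Delta_f^{[1]}$ commutes with $d$ (indeed $\Delta_f^{[1]}du=d\delta_f du=d(\Delta_f u)$ since $d\,du=0$), so that for $u\in V_{\Delta_f}(\lambda)$ the gradient obeys $\Delta_f^{[1]}\nabla u=\lambda\nabla u$, while $\delta_f\nabla u=\Delta_f u=\lambda u$ directly from $\delta_f=\delta+i_{\nabla f}$. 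Next I would establish, by applying Lemma \ref{lemma_IBP} with $T=0$ to the cut-offs $\phi_n$ and letting $n\to\infty$ (the cross terms being controlled by $\abs{\nabla\phi_n}\le c/n$, Cauchy--Schwarz and the finiteness hypotheses): (i) $\int_\Sigma\langle\nabla u,\nabla v\rangle e^{-f}d\mu=\int_\Sigma u\,\Delta_f v\,e^{-f}d\mu$ for $u,v\in V_{\Delta_f}(\cdot)$; and (ii) $\int_\Sigma\langle\nabla u,\omega\rangle e^{-f}d\mu=\int_\Sigma u\,\delta_f\omega\,e^{-f}d\mu=0$ for $u\in V_{\Delta_f}(\cdot)$ and $\omega\in\mathcal H^1_f(\Sigma)$. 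From (i), exactly as in the compact case, eigenfunctions of $\Delta_f$ in $V_{\Delta_f}(\cdot)$ with distinct eigenvalues are $L^2_f$-orthogonal, and so are their gradients.

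For the direct-sum statement I would take $\eta\in E(\Sigma,\Lambda)\cap\mathcal H^1_f(\Sigma)$ and, grouping by eigenvalue, write it as a finite sum $\eta=\sum_\lambda\nabla w_\lambda$ with $w_\lambda\in V_{\Delta_f}(\lambda)$ and $\lambda\le\Lambda$. Applying $\delta_f$ and using $\delta_f\eta=0$ gives $\sum_\lambda\lambda w_\lambda=0$, and the $L^2_f$-orthogonality of the $w_\lambda$ then forces $w_\lambda=0$ for every $\lambda\ne 0$. Hence $\eta=\nabla w_0$ with $\Delta_f w_0=0$, and identity (i) with $u=v=w_0$ yields $\int_\Sigma\abs{\nabla w_0}^2e^{-f}d\mu=0$, i.e. $\eta=0$. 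This shows $E(\Sigma,\Lambda)\cap\mathcal H^1_f(\Sigma)=\{0\}$, which is the first claim.

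For the spectral inequality I would write a general $\eta\in V(\Sigma)$ as $\eta=\zeta+\omega$ with $\zeta\in E(\Sigma,\Lambda)$ and $\omega\in\mathcal H^1_f(\Sigma)$, and note $\Delta_f^{[1]}\omega=0$ since $\omega$ is $f$-harmonic. Writing $\zeta=\sum_\lambda\nabla w_\lambda$ and applying (ii) termwise gives $\int_\Sigma\langle\zeta,\omega\rangle e^{-f}d\mu=0$ and $\int_\Sigma\langle\Delta_f^{[1]}\zeta,\omega\rangle e^{-f}d\mu=\sum_\lambda\lambda\int_\Sigma\langle\nabla w_\lambda,\omega\rangle e^{-f}d\mu=0$; hence $\int_\Sigma\langle\Delta_f^{[1]}\eta,\eta\rangle e^{-f}d\mu=\int_\Sigma\langle\Delta_f^{[1]}\zeta,\zeta\rangle e^{-f}d\mu$ and $\int_\Sigma\abs{\eta}^2e^{-f}d\mu=\int_\Sigma\abs{\zeta}^2e^{-f}d\mu+\int_\Sigma\abs{\omega}^2e^{-f}d\mu\ge\int_\Sigma\abs{\zeta}^2e^{-f}d\mu$. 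Finally, since the $\nabla w_\lambda$ are mutually $L^2_f$-orthogonal with $\Delta_f^{[1]}\nabla w_\lambda=\lambda\nabla w_\lambda$ and $0\le\lambda\le\Lambda$, one gets $\int_\Sigma\langle\Delta_f^{[1]}\zeta,\zeta\rangle e^{-f}d\mu=\sum_\lambda\lambda\int_\Sigma\abs{\nabla w_\lambda}^2e^{-f}d\mu\le\Lambda\sum_\lambda\int_\Sigma\abs{\nabla w_\lambda}^2e^{-f}d\mu=\Lambda\int_\Sigma\abs{\zeta}^2e^{-f}d\mu\le\Lambda\int_\Sigma\abs{\eta}^2e^{-f}d\mu$, which is the second claim.

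The one genuine point of care is the justification of (i) and (ii) on the non-compact $\Sigma$: one must check that the boundary terms $\int_\Sigma u\langle\nabla\phi_n,\nabla v\rangle e^{-f}d\mu$ and $\int_\Sigma u\langle\nabla\phi_n,\omega\rangle e^{-f}d\mu$ tend to $0$, which is precisely where the finite weighted $L^2$-norm and Dirichlet-integral assumptions in $V_{\Delta_f}(\lambda)$ and $\mathcal H^1_f(\Sigma)$ enter; everything else is formal manipulation with Lemma \ref{lemma_IBP} and the commutation $\Delta_f^{[1]}d=d\Delta_f$.
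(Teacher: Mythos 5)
Your proof is correct and follows essentially the same route as the paper's: the weighted integration by parts $(\xi,\nabla u)_f=0$ (your identity (ii)) gives the directness of the sum, and the identity $\int_{\Sigma}\scal{\nabla u}{\nabla v}e^{-f}d\mu=\lambda_i\int_{\Sigma}uv\,e^{-f}d\mu$ (your identity (i)) produces an orthogonal eigenbasis of $E(\Sigma,\Lambda)$ and the Rayleigh-quotient bound. You merely make explicit the cut-off justification and the ``standard argument'' that the paper leaves implicit.
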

\begin{proof} We can  assume that both  spaces are finite dimensional, otherwise the assertion is trivial. The first assertion follows because, if $\xi\in \mathcal{H}^1_f(\Sigma)$ and $u\in W^{1,2}(\Sigma_f)$ then $(\xi,\nabla u)_f=0$, where $(\cdot,\cdot)_f$ is the weighted $L^2$-inner product. Hence $E(\Sigma,\Lambda)$ and $\mathcal{H}^1_f(\Sigma)$ are mutually orthogonal and the assertion follows. 

For the second part, notice that we have
$$
E(\Sigma,\Lambda)=V_{\Delta_f}(\lambda_1)+\dots+V_{\Delta_f}(\lambda_k)
$$
for eigenvalues $0<\lambda_1\leq\dots\leq\lambda_k\leq\Lambda$. One verifies that, if $u\in V_{\Delta_f}(\lambda_i)$ and $v\in V_{\Delta_f}(\lambda_j)$ then:
$$
\int_{\Sigma}\scal{\nabla u}{\nabla v}e^{-f}d\mu=
\lambda_i\int_{\Sigma}uve^{-f}d\mu=\lambda_j\int_{\Sigma}uve^{-f}d\mu.
$$
This allows to construct a (finite) orthonormal basis of $E(\Sigma,\Lambda)$ by eigenfunctions of $\Delta_f$. The standard argument shows that then  any $\eta\in V_{\Sigma}$ satisfies the assertion of the lemma. 
\end{proof}

Let us now prove the theorem. We want to prove that $\dim V(\Sigma)$ is finite and  the following inequality holds:
$$
\mathrm{Ind}_{f}(\Sigma)\geq {{m+1}\choose 2}^{-1}{\rm dim} V(\Sigma).
$$
Set for short:
$$
I=\mathrm{Ind}_{f}(\Sigma), \quad k={\rm dim}V(\Sigma)
$$
with $k$ possibly equal to $+\infty$. 
We have to show that
\begin{equation}\label{prin}
k\leq {{m+1}\choose 2} I.
\end{equation}
Assume by contradiction that $V(\Sigma)$ contains a subspace $E^k$ of dimension $k$ satisying:
$$
k>{{m+1}\choose 2}I.
$$

Consider the exhaustion of $\Sigma$ by relatively compact balls $\{\Omega_n\}\doteq\{B_{2n}\}$ centered at a fixed point $x_0\in\Sigma$. As the index is finite, there exists $n_0$ such that 
$
I=\mathrm{Ind}_{f}(B_{2n})
$
for all $n\geq n_0$. Let $\{\phi_n\}$ be the family of cut-off functions as defined before. 

\medskip

We let $\bar V,\bar W\in\overline{\mathcal{P}}$ and let $V,W$ be their projections on $\Sigma$.  As in the compact case, we introduce the vector field
$$
X_{\bar V,\bar W}=\<\bar V,N\>W-\<\bar W,N\>V.
$$
Define $u=\scal{X_{\bar{V},\bar{W}}}{\xi}$. For each fixed $n\geq n_0$, consider the the family of functions : 
$$
\{u_{n}\}=\{\phi_n u\}.
$$
with $\xi\in E^k$ and $\bar V,\bar W\in\overline{\mathcal{P}}$.
Notice that each such function is zero on the boundary of $\Omega_n\doteq B_{2n}$ and then it can be used as test-functions  for the stability operator of $\Omega_n$. We now proceed exactly as in the compact case. Consider the first $I$ eigenfunctions of the stability operator on $\Omega_n$, say $\{f_1,f_2,\dots,f_I\}$. 
We look for non-zero vector fields $\xi\in E^k$ such that the following orthogonality relations hold for all possible choices of $\bar V,\bar W$:
$$
\int_{\Sigma}\phi_n\scal{X_{\bar V,\bar W}}{\xi}f_1e^{-f}d\mu=\dots=\int_{\Sigma}\phi_n\scal{X_{\bar V,\bar W}}{\xi}f_Ie^{-f}d\mu=0.
$$
This is a system of ${{m+1}\choose 2}I$ homogeneous linear equations in the unknown $\xi\in E^k$. Counting dimensions we see that, as
$
k>{{m+1}\choose 2}I
$
by our assumption, we can find a non-trivial vector field $\xi_n\in E^k$ which verifies all of those equations for all choices of $\bar V,\bar W$. 

The notation $\xi_n$ stresses the fact that such vector field depends on $n$. We can choose $\xi_n$ so that it has unit $L^{2}(\Sigma_f)$-norm:
$$
\int_{\Sigma}\abs{\xi_n}^2e^{-f}\,d\mu=1.
$$
In what follows, we make the identification: 
$$
\{\xi\in E^k: \int_{\Sigma}\abs{\xi}^2 e^{-f}\,d\mu=1\}=\sphere{k-1},
$$
 in particular, we can think of $\xi_n$ as an element of $\sphere{k-1}$.

\medskip

As $h$ is the index of $\Omega_n$, we see that $\lambda_{I+1}\geq 0$ and hence
$$
\int_{\Omega_n}\Big(\abs{\nabla(\phi_nu)}^2-(\abs{A}^2+\mathrm{Hess}f(N,N))\phi_n^2u^2\Big)\cdot e^{-f}d\mu\geq \lambda_{I+1}\int_{\Omega_n}\phi_n^2u^2\,e^{-f}d\mu\geq 0
$$
for all $u=\scal{X_{\bar V,\bar W}}{\xi_n}$.

We apply Lemma \ref{lemma_IBP} to the above inequality with $\Omega=\Omega_n$, $\phi=\phi_n$ and  $T=-(\abs{A}^2+\mathrm{Hess}f(N,N))$. We thus obtain
$$
0\leq \int_{\Omega_n}\phi_n^2uL_fu\cdot e^{-f}\,d\mu+\int_{\Omega_n}u^2\abs{\nabla\phi_n}^2\cdot e^{-f}\,d\mu
$$
which again, is valid for all $\bar V,\bar W$. Proceeding as in the compact case, integrating with respect to $\bar V,\bar W$  we obtain:
$$
\begin{aligned}
0\leq &-\int_{\Omega_n}\phi_n^2\mathrm{Hess}f(\xi_n,\xi_n)e^{-f}\,d\mu-\int_{\Omega_n}\phi_n^2
\mathrm{Hess}f(N,N)\abs{\xi_n}^2e^{-f}\,d\mu\\
&+ \int_{\Omega_n}\phi_n^2\scal{\Delta^{[1]}_f\xi_n}{\xi_n}e^{-f}d\mu+ \int_{\Omega_n}\abs{\nabla\phi_n}^2\abs{\xi_n}^2\cdot e^{-f}\,d\mu.
\end{aligned}
$$
Recalling the properties of the cut-off functions $\phi_n$ (namely $\phi_n=1$ on $B_n$), using the hypothesis $\mathrm{Hess}f\geq\mu>0$ and the inequality in Lemma \ref{vusigma}, we obtain, for all $n\geq n_0$ :
\begin{equation}
\begin{aligned}
(2\mu-\Lambda) \int_{B_n}\abs{\xi_n}^2\cdot e^{-f}\,d\mu&\leq \dfrac{c^2}{n^2}\int_{B_{2n}}\abs{\xi_n}^2\cdot e^{-f}\,d\mu\\
&\leq \dfrac{c^2}{n^2}.
\end{aligned}
\end{equation}
As by assumption $2\mu-\Lambda>0$ we see
\begin{equation}\label{absurd}
\lim_{n\to\infty}\int_{B_n}\abs{\xi_n}^2\cdot e^{-f}\,d\mu=0.
\end{equation}
However, we will show below that this can't hold, thus getting a contradiction. The contradiction comes from the assumption 
$
k>{{m+1}\choose 2}I.
$
Hence
$$
k\leq {{m+1}\choose 2}I,
$$
as asserted. 

\medskip

Let us then show that \eqref{absurd} can't hold. By the compactness of $\sphere{k-1}$, the infinite set $\{\xi_n\in\sphere{k-1}: n\geq n_0\}$ has an accumulation point, hence there exists a subsequence $\{\xi_{n_j}\}_{j=1,2,\dots}\in\sphere{k-1}$ which converges to $\xi\in\sphere{k-1}\subseteq E^k$ in the $L^{2}(\Sigma_f)$-sense as $j\to\infty$:
$$
\lim_{j\to\infty}\xi_{n_j}=\xi.
$$
We will presently show that
\begin{equation}\label{limitone}
\lim_{j\to\infty}\int_{B_{n_j}}\abs{\xi_{n_j}}^2e^{-f}\,d\mu=1,
\end{equation}
which will contradict \eqref{absurd}.
Introduce the notation
$$
\norm\xi_{\Sigma}^2=\int_{\Sigma}\abs{\xi}^2e^{-f}\,d\mu,\quad \norm\xi_{B_{n_j}}^2=\int_{B_{n_j}}\abs{\xi}^2e^{-f}\,d\mu
$$
Clearly $\norm\xi_{\Sigma}\geq \norm\xi_{B_{n_j}}$ and 
$
\lim_{j\to\infty}\norm{\xi_{n_j}-\xi}_{\Sigma}=0,
$
by assumption. 
Now, for all $n\geq n_0$:
$$
\int_{B_{n_j}}\abs{\xi_{n_j}}^2e^{-f}\,d\mu
=\int_{B_{n_j}}(\abs{\xi_{n_j}}^2-\abs{\xi}^2)e^{-f}\,d\mu+\int_{B_{n_j}}\abs{\xi}^2e^{-f}\,d\mu
$$
As $\lim_{n\to\infty}\int_{B_{n_j}}\abs{\xi}^2e^{-f}\,d\mu=1$, for \eqref{limitone} to be true  it is enough to show:
\begin{equation}\label{limittwo}
\lim_{j\to\infty}\left |\int_{B_{n_j}}(\abs{\xi_{n_j}}^2-\abs{\xi}^2)e^{-f}\,d\mu\right|=0.
\end{equation}

For any pair $v,w$ of unit vectors in an inner product space one has the inequality
$$
\left|\norm{v}^2-\norm{w}^2\right|\leq 2\norm{v-w}.
$$
Therefore
$$
\lim_{j\to\infty}\left|\norm{\xi_{n_j}}_{B_{n_j}}^2-\norm{\xi}_{B_{n_j}}^2\right|\leq 
2\lim_{j\to\infty}\norm{\xi_{n_j}-\xi}_{B_{n_j}}\leq
2\lim_{j\to\infty}\norm{\xi_{n_j}-\xi}_{\Sigma}=0
$$
and \eqref{limittwo} follows.



\subsection{Proof of Theorem \ref{twog}}\label{Subsec4.2}
Recall that  $\Sigma$ is a complete, connected, orientable surface having genus $g$. If  $\mathcal H^1_f(\Sigma)$ denotes the space of $L^{2}_{f}$ $f$-harmonic one-forms on $\Sigma$, then we have to show that
$$
\dim\mathcal H^1_f(\Sigma)\geq 2g.
$$

{\bf Step 1.} To any closed curve $\gamma$ on $\Sigma$, we  associate a closed $1$-form $\eta_{\gamma}$, whose support is close to $\gamma$.

\smallskip

In fact, as $\Sigma$ is orientable, we have a global unit normal vector field $\nu$ on $\gamma$. Let $U_{2\epsilon}$ be the set of points at distance less than $2\epsilon$ to $\gamma$, on the side defined by $\nu$.  If $\epsilon$ is small enough, this set is an annulus, and we can construct a smooth function 
$f:\Sigma\setminus\gamma\to \reals$ such that
$$
f=\twosystem
{1\quad\text{on}\quad U_{\epsilon}\setminus\gamma}
{0\quad\text{on}\quad (\Sigma\setminus\gamma)\setminus U_{2\epsilon}}
$$
Now set:
$$
\eta_{\gamma}=\twosystem
{df\quad\text{on}\quad \Sigma\setminus\gamma}
{0\quad\text{on}\quad \gamma}
$$
Then, $\eta_{\gamma}$ is a smooth, globally defined 1-form, supported on the compact set given by the closure of $U_{2\epsilon}\setminus U_{\epsilon}$. Clearly, $\eta_{\gamma}$ is closed. 

\medskip

{\bf Step 2.} Assume that $\Sigma\setminus\gamma$ is connected; then, we can 
easily construct a closed curve $\tilde\gamma$ meeting $\gamma$ only at one point $p$, and intersecting it transversally. We fix such a  curve $\tilde\gamma$ and  call it the {\it dual curve} of $\gamma$. It is also easy to verify that, if $\eta_{\gamma}$ is the one-form associated to $\gamma$, then, for a suitable orientation of $\gamma$:
$$
\int_{\tilde\gamma}\eta_{\gamma}=1.
$$
In fact, pick points $p_+,p_-\in\tilde\gamma$ very close to $p=\tilde\gamma\cap\gamma$ and laying on the two opposite sides of $\gamma$. The integral of $\eta_{\gamma}$ on the arc joining $p_+$ and $p_-$, not intersecting $\gamma$, is $f(p_+)-f(p_-)$, which is $1$ when the two points are sufficiently close to $p$. Taking the limit shows the assertion. 

\medskip

{\bf Step 3.} Since the genus of $\Sigma$ is $g$, we can find $g$ disjoint closed curves $\gamma_1,\dots,\gamma_g$ such that the set  $\Sigma\setminus(\gamma_1\cup\dots\cup\gamma_g)$ is connected. Construct the associated $1$-forms  $\eta_1,\dots,\eta_g$ so that they have mutually disjoint support (this is certainly possible, as explained before).
To each $\gamma_j$ we associate its dual closed curve $\tilde\gamma_j$, as above; looking at the process, we can do it so that this family of curves has the following properties:

\nero $\tilde\gamma_j$ intersects $\gamma_j$ only once, transversally, and 

\nero $\tilde\gamma_j$ does not intersect any of the other curves $\gamma_k$, with $k\ne j$.
\medskip

The first property gives $\int_{\tilde\gamma_j}\eta_j=1$; the second property (after eventually restricting the support of each $\eta_j$) gives $\int_{\tilde\gamma_j}\eta_k=0$ for $j\ne k$. In conclusion we have:
\begin{equation}\label{path}
\int_{\tilde\gamma_j}\eta_k=\delta_{jk} \quad\text{for all $j,k=1,\dots,g$}.
\end{equation}

{\bf Step 4.} We now use the Hodge orthogonal decomposition, due to E. L. Bueler \cite[Theorem 5.7]{Bue}.
$$
L^{2}_f(\Lambda^1(\Sigma))=A\oplus B_f\oplus H^1_f(\Sigma),
$$
where
$$
\threesystem
{A=\overline{\{dg: \text{$g\in C^{\infty}(\Sigma)$ has compact support}\}}}
{B_f=\overline{\{\delta_f\psi: \text{$\psi\in\Lambda^2(\Sigma)$ has compact support}\}}}
{\Cal{H}^1_f(\Sigma)=\{\omega\in L^{2}_f(\Lambda^1(\Sigma)): d\omega=\delta_f\omega=0\}}
$$
the closure being taken in the $L^2_f$-norm. We denote by $P$ the orthogonal projection onto $\Cal{H}^1_f(\Sigma)$.

\medskip

{\bf Step 5.} We now consider the $L^2_f$ $f$-harmonic forms:
$$
\omega_j\doteq P(\eta_j); \quad j=1,\dots,g
$$ 
and claim that they are linearly independent. In fact, as $\eta_j$ is closed, we have 
$\omega_j=\eta_j+dg_j$ with $dg_j\in A$. Hence, by \eqref{path}, 
$$
\int_{\tilde\gamma_j}\omega_k=\delta_{jk}.
$$
If $\sum_{j=1}^ga_j\omega_j=0$, integrating both sides on the dual curves gives $a_1=\dots=a_g=0$.

\medskip

{\bf Step 6.} At this point, we need $g$ more linearly independent $L^2_f$-harmonic $1$-forms. In the unweighted case, one simply takes the Hodge-star dual of the forms in the previous step. Recall that, if $(e_1,e_2)$ be a positively oriented local orthonormal frame with dual frame $(\theta_1,\theta_2)$, then  the classical Hodge $\star$-operator is defined by the rule:
$$
\omega\wedge\star\eta=\scal{\omega}{\eta}\theta_1\wedge\theta_2.
$$
Then one checks that $\star d\omega=\delta\star\omega$, and $d\star\omega=-\star\delta\omega$. If $f=0$ and $\omega$ is harmonic, then $\star\omega$ is also harmonic. However, this is no longer true in the weighted case and we introduce the weighted Hodge star operator $\star_f$ as follows.  For any one-form $\omega$:
$$
\star_f\omega\doteq e^f\star\omega.
$$
The following lemma has a straightforward proof, which we omit.

\begin{lemma}\label{hodgestar} One has
$$
\delta_f\star_f\omega=\star_fd\omega, \quad d\star_{-f}\omega=-\star_{-f}\delta_f\omega.
$$
\end{lemma}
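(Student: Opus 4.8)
The plan is to deduce both identities from the two classical Hodge identities on surfaces recalled just before the statement, namely $\star d\omega=\delta\star\omega$ and $d\star\omega=-\star\delta\omega$, by carefully tracking the conformal factors $e^{\pm f}$; throughout, I read $\star_f$ and $\star_{-f}$ as $e^{\pm f}\star$ on forms of every degree, since the identity $\delta_f\star_f\omega=\star_f d\omega$ implicitly applies $\star_f$ to the $2$-form $d\omega$. First I would record two elementary facts. The first is that $\delta_f\alpha=e^{f}\delta(e^{-f}\alpha)$ on forms of every degree: this follows at once from the Leibniz rule $\delta(h\alpha)=h\,\delta\alpha-i_{\nabla h}\alpha$ taken with $h=e^{-f}$, using $\nabla e^{-f}=-e^{-f}\nabla f$ together with $\delta_f=\delta+i_{\nabla f}$. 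The second is the pointwise identity $df\wedge\star\omega=\langle df,\omega\rangle\,{\rm dvol}=(i_{\nabla f}\omega)\,{\rm dvol}$ for a $1$-form $\omega$, together with the fact that on a surface $\star g=g\,{\rm dvol}$ for a function $g$, so that in particular $d\star\omega=-(\delta\omega)\,{\rm dvol}$.

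For the first identity, I would write $\star_f\omega=e^{f}\star\omega$ and apply the formula for $\delta_f$:
$$
\delta_f(\star_f\omega)=e^{f}\delta(e^{-f}\cdot e^{f}\star\omega)=e^{f}\delta(\star\omega)=e^{f}\star d\omega=\star_f d\omega,
$$
the third step being the classical $\delta\star\omega=\star d\omega$. For the second identity, write $\star_{-f}\omega=e^{-f}\star\omega$ and expand by the Leibniz rule for $d$:
$$
d(e^{-f}\star\omega)=-e^{-f}\,df\wedge\star\omega+e^{-f}\,d\star\omega=-e^{-f}(i_{\nabla f}\omega)\,{\rm dvol}-e^{-f}(\delta\omega)\,{\rm dvol},
$$
using the two facts above; this equals $-e^{-f}(\delta\omega+i_{\nabla f}\omega)\,{\rm dvol}=-e^{-f}(\delta_f\omega)\,{\rm dvol}$, which is precisely $-\star_{-f}\delta_f\omega$ since $\delta_f\omega$ is a function.

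The hard part here is essentially nonexistent --- this is the ``straightforward'' computation the paper omits --- but there are two places to be careful: the sign conventions for $\star$, $\star\star$ and $\delta$ in dimension two (pinned down once and for all by adopting the two classical identities quoted above), and the bookkeeping of form degrees, so that $\star_f$ is understood consistently on $2$-forms as well as on $1$-forms. If one prefers to avoid extending $\star_f$ beyond $1$-forms, one can instead apply $\star$ once more to the first identity and verify the resulting equality of functions, a computation that runs identically modulo the relation $\star\star=1$ on $0$- and $2$-forms.
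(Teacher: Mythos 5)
Your computation is correct, and since the paper explicitly omits the proof of this lemma as ``straightforward,'' yours is exactly the intended verification: reduce $\delta_f$ to $e^{f}\delta(e^{-f}\cdot)$, invoke the two classical identities $\delta\star\omega=\star d\omega$ and $d\star\omega=-\star\delta\omega$, and track the factors $e^{\pm f}$. You are also right to flag that $\star_f$ must be read as $e^{f}\star$ on forms of every degree for the first identity to typecheck; that is the correct reading of the paper's definition, and your handling of it (and of the sign conventions) is sound.
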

\medskip

{\bf Step 7.} Define, for $j=1,\dots,g$, 
$$
\zeta_j=P(\star_f\eta_j).
$$
Note that this is well-defined because $\star_f\eta_j$, being compactly supported, is in $L^2_f$. We need to show that the $f$-harmonic forms $\zeta_1,\dots,\zeta_g$ are linearly independent. To that end, observe that, by Lemma \ref{hodgestar}, 
$\star_f\eta_j$ is $f$-coclosed, hence $\zeta_j=\star_f\eta_j+\delta_f\psi_j$ for some $\delta_f\psi_j\in B_f$.  Assume $\sum_ja_j\zeta_j=0$; then $\sum_ja_j(\star_f\eta_j)=\delta_f\psi$ for some $\delta_f\psi\in B_f$. Applying $\star_{-f}$ on both sides, using Lemma \ref{hodgestar} and noting that $\star_{-f}\star_f=\star^2=-1$
we see:
$$
\sum_ja_j\eta_j=-\star_{-f}\delta_f\psi=d\star_{-f}\psi
$$
that is, the form on the left is exact. Hence, integrating both sides on the dual curves $\tilde\gamma_1,\dots,\tilde\gamma_g$ we get $a_1=\dots=a_g$. The conclusion is that $\zeta_1,\dots,\zeta_g$ are linearly independent.

\medskip

{\bf Step 8.} Now consider the subspace $E\subseteq H^1_f(\Sigma)$ given by
$$
E={\rm span}(\omega_1,\dots,\omega_g,\zeta_1,\dots,\zeta_g).
$$
We prove that ${\rm dim}E=2g$, which will imply the final assertion. It is clear that
it is enough to show that 
$$
(\omega_j,\zeta_k)_f=0
$$
for all $j,k$. This is done  as in the unweighted case.

\medskip

Recall that $\omega_j=\eta_j+dg_j$, and $\zeta_k=\star_f\eta_k+\delta_f\phi_k$, for $dg_j\in A$ and $\delta_f\phi_k\in B_f$.  Then:
$$
(\omega_j,\zeta_k)_f=(\eta_j,\star_f\eta_k)_f+(\eta_j,\delta_f\phi_k)_f+(dg_j,\star_f\eta_k)_f+
(dg_j,\delta_f\phi_k)_f.
$$
Now $(dg_j,\delta_f\phi_k)_f=0$ because the Hodge-Bueler decomposition is orthogonal. Next, a compactly supported closed form is always orthogonal to an f-coexact form (use Green formula)  hence $(\eta_j,\delta\phi_k)_f=0$. Similary, as $\star_f\eta_k$ is f-coclosed and compactly supported, we see
that $(dg_j,\star_f\eta_k)_f=0$. We end-up with
$$
(\omega_j,\zeta_k)_f=(\eta_j,\star_f\eta_k)_f.
$$
If $j=k$ this is zero because $\eta_j$ and $\star_f\eta_j$ are pointwise orthogonal. If $j\ne k$ this is zero because $\eta_j$ and $\star_f\eta_k$ have disjoint support. The assertion follows.

\end{document}